\providecommand{\one}{1}
\providecommand{\Bcom}{B_{\textnormal{com}}}
\providecommand{\Hom}{\textnormal{Hom}}
\providecommand{\Z}{\mathbb{Z}}
\providecommand{\R}{\mathbb{R}}
\providecommand{\RP}{\mathbb{RP}}
\providecommand{\colim}{\mathop{\textnormal{colim}}}
\providecommand{\hocolim}{\mathop{\textnormal{hocolim}}}
\providecommand{\Bcom}{B_{\mathrm{com}}}
\providecommand{\Ecom}{E_{\mathrm{com}}}
\providecommand{\AfCom}{{\mathrm{Af{}Com}}}
\providecommand{\Ab}{{\mathrm{Ab}}}
\providecommand{\AbCo}{{\mathrm{AbCo}}}
\providecommand{\mAbCo}{{\mathrm{mAbCo}}}
\providecommand{\Set}{{\mathsf{Set}}}
\providecommand{\Top}{{\mathsf{Top}}}
\providecommand{\pt}{{\mathrm{pt}}}
\providecommand{\Sing}{{\mathrm{Sing}}}
\providecommand{\Map}{{\mathrm{Map}}}
\providecommand{\Fun}{{\mathsf{Fun}}}
\providecommand{\op}{{\mathsf{op}}}
\providecommand{\cx}{{\mathsf{cx}}}
\providecommand{\sSets}{{\mathsf{sSets}}}
\providecommand{\cm}{{\mathfrak{c}}}
\newtheorem{theorem}{Theorem}
\newtheorem*{theorem*}{Theorem}
\newtheorem{lemma}{Lemma}
\newtheorem{proposition}[lemma]{Proposition}
\newtheorem{corollary}[lemma]{Corollary}
\newtheorem*{introthm}{Theorem \ref{\introthmref}}
\theoremstyle{definition}
\newtheorem{definition}[lemma]{Definition}
\newtheorem{example}[lemma]{Example}
\newtheorem{remark}[lemma]{Remark}
\title{The Complex of Affinely Commutative Sets}
\author{Omar Antol\'{i}n-Camarena and Bernardo Villarreal}
\date{\today}
\begin{document}

\maketitle


\begin{abstract}
  We show that for some classes of groups $G$, the homotopy fiber $\Ecom G$ of the inclusion of the classifying space for commutativity $\Bcom G$ into the classifying space $BG$, is contractible if and only if $G$ is abelian. We show this both for compact connected Lie groups and for discrete groups. To prove those results, we define an interesting map $\cm \colon \Ecom G \to B[G,G]$ and show it is not nullhomotopic for the non-abelian groups in those classes. Additionally, we show that $\cm$ is 3-connected for $G=O(n)$ when $n \ge 3$.
\end{abstract}

\section{Introduction}

Let $G$ be a topological group, and let $\Bcom G$ be the classifying space for commutativity defined by Adem, Cohen and Torres-Giese \cite{Ad5}. This space is defined as the geometric realization of the simplicial space of ordered commuting tuples $\Hom(\Z^\bullet, G)$. The inclusions $\Hom(\Z^k,G)\subseteq G^k$ for $k\ge 0$ give rise to a canonical map $\Bcom G\to BG$. The purpose of this paper is to investigate the homotopy fiber of this map which is denoted $\Ecom G$. Whenever $G$ is abelian, $\Bcom G=BG$, and thus $\Ecom G$ is contractible. The natural question to ask here is if the converse holds. In general this is not true, for example $SL(2,\R)$ is non-abelian and yet $\Ecom SL(2,\R)$ is contractible (see Remark \ref{rmk:Ecom-SL2R}). Nonetheless, for compact connected Lie groups $G$, Adem and G\'{o}mez in \cite{Ad1} proved the converse for a smaller variant of $\Ecom G$, namely, the homotopy fiber $\Ecom G_\one$ of the realization of the simplicial inclusion $\Hom(\Z^k,G)_\one\to G^k$, where the subindex $\one$ denotes the connected component of the trivial homomorphism $\one\colon \Z^k\to G$. This in fact answers the question for the groups $G=SU(n),U(n)$ and $Sp(n)$, since in these cases $\Hom(\Z^k,G)$ is path-connected.

Here we extend this to other classes of groups, using a new map $\cm\colon \Ecom G\to B[G,G]$ that we define using an alternative simplicial model for $\Ecom G$. We say that an ordered $(n+1)$-tuple $(g_0,\ldots,g_n)$ is \emph{affinely commutative} if the quotients ${g_i^{-1}}g_{i+1}$ pairwise commute. The space of affinely commutative $(n+1)$-tuples sit inside the space of $n$-simplices of the nerve of the indiscrete category associated to $G$, defining a subsimplicial space $\AfCom_\bullet(G)$, which is naturally isomorphic to the simplicial model of $\Ecom G$ originally defined in \cite{Ad5}. The commutators in $G$ induce a simplicial map $\cm_\bullet\colon\AfCom_\bullet G\to N[G,G]$ to the nerve of the commutator group of $G$. The realization of this map $\cm\colon \Ecom G\to B[G,G]$ has proved to be interesting and useful, as shown in our main results which answer the question posed before for discrete groups and for compact connected Lie groups.

\newcommand{\introthmref}{T1}
\begin{introthm}
Let $G$ be a discrete group. Then $\cm_*\colon \pi_1(\Ecom G)\to [G,G]$ is a surjective homomorphism. In particular, $G$ is abelian if and only if $\pi_1(\Ecom G)=1$.
\end{introthm}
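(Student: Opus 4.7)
The homomorphism property of $\cm_*$ is automatic since $\cm$ is a based continuous map, so the real content of the theorem is surjectivity; the ``in particular'' statement then follows formally. Indeed, if $G$ is abelian then $\Bcom G = BG$, so $\Ecom G$ is contractible and $\pi_1(\Ecom G) = 1$; conversely, if $\pi_1(\Ecom G) = 1$ then surjectivity of $\cm_*$ forces $[G, G] = 1$, hence $G$ is abelian.

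My plan for surjectivity is to exhibit, for each pair $g, h \in G$, an explicit based loop $\sigma_{g, h}$ in $\Ecom G$ whose image under $\cm_*$ is $[g, h]$; as $[G, G]$ is generated by its commutators as a group, this suffices. I work in the simplicial model $\AfCom_\bullet G$. Every ordered pair $(x, y) \in G \times G$ is a $1$-simplex (the affinely commutative condition is vacuous on pairs), so the $1$-skeleton of $\Ecom G$ is the complete graph on the vertex set $G$. Taking the basepoint at the vertex $1$, I let $\sigma_{g, h}$ be the edge loop traversing the $1$-simplices $(1, g)$, $(g, h)$, and $(h, 1)$ in order---geometrically, this is the boundary of the triangle $(1, g, h)$, which itself is an affinely commutative $2$-simplex precisely when $g$ and $h$ commute. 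In that commuting case $\sigma_{g,h}$ bounds and so is null-homotopic, consistent with $[g, h] = 1$.

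To evaluate $\cm_*[\sigma_{g, h}]$, I appeal to the definition of $\cm_\bullet$ on $1$-simplices: the edge $(x, y)$ is sent to the $1$-simplex of $N[G, G]$ labeled by the commutator $[x, y]$. The three edges of $\sigma_{g, h}$ then carry the labels $[1, g] = 1$, $[g, h]$, and $[h, 1] = 1$, so the image loop in $B[G, G]$ represents $1 \cdot [g, h] \cdot 1 = [g, h]$ in $\pi_1(B[G, G]) = [G, G]$, giving the desired $\cm_*[\sigma_{g, h}] = [g, h]$.

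The main technical point, and the place where the affinely commutative condition really enters, is verifying that the formula $\cm_n(g_0, \ldots, g_n) = ([g_0, g_1], [g_1, g_2], \ldots, [g_{n-1}, g_n])$ actually defines a simplicial map $\AfCom_\bullet G \to N[G, G]$. The degeneracy conditions are immediate from $[g, g] = 1$, while all face identities reduce inductively to the single algebraic identity $[g_0, g_1] \cdot [g_1, g_2] = [g_0, g_2]$ on affinely commutative triples, which I expect to be a short direct calculation using that $g_0^{-1}g_1$ commutes with $g_1^{-1}g_2$.
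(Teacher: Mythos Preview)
Your proof is correct and follows essentially the same line as the paper's: both rest on the commutator identity $[g,h][h,k]=[g,k]$ for affinely commutative triples (the paper's Lemma~\ref{lem:afftrip}), which is precisely the ``short direct calculation'' you anticipate at the end. The only difference is packaging: the paper first extracts an explicit presentation of $\pi_1(\Ecom G)$ from the simplicial complex $\AfCom(G)$ and then checks that $x_{g,h}\mapsto[g,h]$ respects the relations, whereas you bypass the presentation by taking $\cm_*$ as given (homomorphism for free by functoriality) and exhibiting the explicit triangular loop $(1,g),(g,h),(h,1)$ hitting $[g,h]$---a slightly more economical route to the same conclusion.
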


\renewcommand{\introthmref}{ThGLie}
\begin{introthm}
   Let $G$ be a compact connected Lie group. Then the map $\cm\colon\Ecom G\to B[G,G]$ is nullhomotopic if and only if $G$ is abelian.  In particular, $\Ecom G$ is contractible if and only if $G$ abelian.
\end{introthm}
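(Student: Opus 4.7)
The backward direction is immediate: if $G$ is abelian then $[G,G]=1$ and $\Bcom G=BG$, so $B[G,G]\simeq\ast$, $\Ecom G\simeq\ast$, and $\cm$ is trivially null. The ``in particular'' clause is equally easy in the reverse, since nullity of $\cm$ combined with the main assertion forces $G$ abelian.

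For the forward direction, suppose $G$ is compact, connected, and non-abelian. Then $[G,G]$ is a non-trivial compact connected semisimple Lie group, so $B[G,G]$ is $3$-connected and $\pi_4(B[G,G])=\pi_3([G,G])$ is free abelian of positive rank (one $\Z$ per simple factor of $\mathfrak{g}$). Since $\pi_1(B[G,G])=0$, Theorem~1 cannot be invoked directly: the non-triviality of $\cm$ must be detected in degree at least $4$.

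The plan is a naturality reduction to the case $G=SU(2)$. Every non-abelian compact connected Lie group admits a root homomorphism $\varphi\colon SU(2)\to G$; since $SU(2)$ is perfect this factors through $[G,G]$, and as its image lies in a single simple summand the induced map $\varphi_*\colon\pi_3(SU(2))\to\pi_3([G,G])$ is non-zero. Setting $H=SU(2)$ and noting $[H,H]=H$, the naturality of $\cm$ (which comes directly from the naturality of commutators in the simplicial definition of $\cm_\bullet$) yields a commutative square
\[
\xymatrix{
\Ecom H \ar[r] \ar[d]_{\cm_H} & \Ecom G \ar[d]^{\cm_G} \\
BH \ar[r]^{B\varphi} & B[G,G].
}
\]
If $\cm_G\simeq\ast$ then the composite $B\varphi\circ\cm_H$ is null; since $(B\varphi)_*$ is injective on $\pi_4(BH)=\Z$ by the choice of $\varphi$, this would force $(\cm_H)_*\colon\pi_4(\Ecom H)\to\Z$ to vanish.

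The theorem thus reduces to showing $(\cm_{SU(2)})_*\colon\pi_4(\Ecom SU(2))\to\Z$ is non-zero; this is the main obstacle. A natural approach exploits the explicit description of $\Hom(\Z^\bullet,SU(2))$ due to Adem-Cohen, which presents $\Bcom SU(2)$ via orbits of the form $SU(2)\times_{N(T)}T^n$. Combined with the Adem-G\'omez theorem that $\Ecom SU(2)$ is non-contractible, one expects to identify a generator of $H^4(\Ecom SU(2);\Z)$ as the pullback under $\cm$ of the second Chern class of $BSU(2)$, by tracing through the simplicial definition of $\cm_\bullet$ on the affinely commutative $(n+1)$-tuples assembled from commuting pairs in $SU(2)$.
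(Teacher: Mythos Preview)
Your reduction to $SU(2)$ via a root homomorphism is sound, but the proof is incomplete: you have not established that $(\cm_{SU(2)})_*\colon\pi_4(\Ecom SU(2))\to\Z$ is nonzero. You flag this yourself as ``the main obstacle'' and then offer only a program (``one expects to identify a generator of $H^4(\Ecom SU(2);\Z)$ as the pullback \ldots\ by tracing through the simplicial definition''). The Adem--G\'omez non-contractibility of $\Ecom SU(2)$ says nothing about $\cm$, and the orbit description of $\Bcom SU(2)$ does not by itself compute $\cm^*(c_2)$. So as written there is a genuine gap.

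The paper's argument sidesteps this computation entirely by working with the $1$-skeleton rather than reducing to a subgroup. One has $F_1\Ecom G=F_1EG\simeq\Sigma(G\wedge G)$, and the restriction of $\cm$ to this skeleton is, up to the inclusion $\Sigma[G,G]\to B[G,G]$, the suspension of the reduced commutator $\tilde c\colon G\wedge G\to[G,G]$. If $\cm$ were null, then so would be the adjoint $G\wedge G\xrightarrow{\tilde c}[G,G]\simeq\Omega B[G,G]$, hence $\tilde c$ and therefore $c$ would be null; the Araki--James--Thomas theorem then forces $G$ abelian. No group-specific calculation is required. Ironically, the cleanest way to fill the hole in your argument for $SU(2)$ is exactly this $1$-skeleton diagram: for $SU(2)$ the reduced commutator $\tilde c\colon S^3\wedge S^3\to S^3$ is a generator of $\pi_6(S^3)$, which is precisely what the paper exploits later when analyzing $\pi_7$.
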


The first section of this paper gives a simplicial complex $\AfCom(G)$ that models the homotopy type of $\Ecom G$ for discrete groups $G$. This complex is more computationally friendly; for example, using computer calculations we can conclude that $\Ecom S_5$ is not an Eilenberg--MacLane space.

In the last subsection, we study the map $\cm$ for the orthogonal groups $O(n)$, showing that for $n\geq 3$, the homotopy fiber of $\cm$ is $2$-connected. The case $n=2$ is quite interesting. It was shown in \cite{lowdim}, that $\Ecom O(2)\simeq\Sigma(S^1\times S^1)$, so that the adjoint of the map $\cm\colon \Ecom O(2)\to BSO(2)$ up to homotopy is a map $S^1\times S^1\to S^1$. 

\renewcommand{\introthmref}{adcomO(2)}
\begin{introthm}
The adjoint of the map $\cm\colon \Ecom O(2)\to BSO(2)$ is homotopic to the product in $S^1$ or its inverse.
\end{introthm}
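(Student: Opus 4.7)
The plan is to identify the adjoint $\phi\colon S^1 \times S^1 \to SO(2)$ of $\cm$ up to homotopy by computing its class in $[S^1 \times S^1,\, S^1]$. Since $S^1$ is a $K(\Z,1)$, we have
\[
  [S^1 \times S^1,\, S^1] \;\cong\; H^1(S^1 \times S^1;\, \Z) \;\cong\; \Z\oplus\Z,
\]
with the two integer coordinates (the ``bidegree'') obtained by restricting to the two coordinate circles. The multiplication on $S^1$ has bidegree $(1,1)$ and its inverse has bidegree $(-1,-1)$, while every other class is a distinct homotopy class, so the theorem reduces to showing that the bidegree of $\phi$ is $(\pm 1, \pm 1)$ with equal signs.

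To compute these integers I would proceed in two steps. First, use the explicit construction of the homotopy equivalence $\Ecom O(2) \simeq \Sigma(S^1\times S^1)$ from \cite{lowdim} to exhibit two canonical loops $\gamma_1, \gamma_2\colon S^1 \to \Ecom O(2)$ whose suspensions give the two $S^2$-summands of $\Sigma(S^1\times S^1) \simeq S^2 \vee S^2 \vee S^3$, so that they represent generators of $\pi_2(\Ecom O(2)) \cong \Z^2$. These should be realizable as loops of affinely commutative tuples in $O(2)$ built from rotations $R_\theta$ and reflections $r_\alpha$. Second, apply the simplicial commutator map $\cm_\bullet\colon \AfCom_\bullet O(2) \to N_\bullet [O(2), O(2)]$ to each $\gamma_i$ and read off the winding number of the resulting loop in $[O(2), O(2)] = SO(2)$, using the standard formulas $[R_\theta, r_\alpha] = R_{2\theta}$ and $[r_\alpha, r_\beta] = R_{4(\beta-\alpha)}$. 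Passing to the adjoint then gives the bidegree of $\phi$.

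The main obstacle will be a careful matching of parameterizations. The commutator formulas above, applied to the obvious one-parameter families, a priori yield winding numbers of the form $\pm 2$ or $\pm 4$, so one must verify that the equivalence of \cite{lowdim} identifies these ``naive'' circles with multiply-covered copies of the coordinate circles in $S^1\times S^1$, cancelling the extra factors down to $\pm 1$. Equally importantly, one must check that the two resulting integers lie on the diagonal --- i.e., are equal --- rather than being, say, $(1,-1)$; this is what rules out other maps of bidegree $(\pm 1, \pm 1)$ and singles out the product and its inverse.
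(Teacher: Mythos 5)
Your reduction is the same as the paper's: since $SO(2)$ is a $K(\Z,1)$, the adjoint $\phi\colon S^1\times S^1\to SO(2)$ is determined by its effect on $H_1$ (equivalently, by its bidegree in $[S^1\times S^1,S^1]\cong\Z\oplus\Z$), and the theorem amounts to showing this bidegree is $(1,1)$ or $(-1,-1)$. But everything after that reduction is left as a to-do list, and the items on that list are precisely the mathematical content of the theorem. You do not produce the loops $\gamma_1,\gamma_2$, you do not verify that the equivalence $\Ecom O(2)\simeq\Sigma(S^1\times S^1)$ of \cite{lowdim} sends your ``naive'' circles to multiply-covered coordinate circles so that the factors of $2$ and $4$ cancel to $\pm1$, and you do not verify that the two resulting degrees agree in sign. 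You name these as ``the main obstacle'' and ``equally important,'' which is accurate, but naming an obstacle is not the same as overcoming it. There is also a structural difficulty you should anticipate: the spheres in $\Ecom O(2)$ that one can easily write down from one-parameter families of affinely commutative pairs are exactly those coming from $F_1\Ecom O(2)\simeq \Sigma(O(2)\wedge O(2))$, and (as the paper's argument shows) their images in $H_2(\Ecom O(2))\cong\Z\oplus\Z$ all lie on the diagonal. So such families can never isolate the two coordinate generators separately; to compute the bidegree coordinate-by-coordinate you would genuinely need to unwind the cell structure produced in \cite{lowdim}, which is a substantial computation you have not attempted.

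For contrast, the paper avoids explicit generators of $\pi_2(\Ecom O(2))$ altogether. It computes that the reduced commutator $\Sigma\tilde c$ sends each $2$-dimensional generator of $H_2(\Sigma(O(2)\wedge O(2)))\cong\Z^4$ to $\pm2$, so the composite through the bottom of the comparison square has image $2\Z$. It then invokes equivariance under the monodromy action of $\pi_1(BO(2))=\Z/2$: this action negates the four generators upstairs and swaps-and-negates the two coordinates of $H_2(\Ecom O(2))$, which forces the image of $H_2(\Sigma(O(2)\wedge O(2)))$ into the diagonal and forces $\cm_*$ to have the form $(u,v)\mapsto k(u+v)$. Comparing the two paths around the square gives $2k\Z=2\Z$, hence $k=\pm1$. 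The equivariance step is the idea your proposal is missing; it is what lets one conclude both that the answer lies on the diagonal and that the stray factor of $2$ divides out, without ever identifying the coordinate circles explicitly. As written, your proposal is a plausible plan with a genuine gap at its center, not a proof.
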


\subsection*{Acknowledgments}

We'd like to thank Jeffrey Carlson and Dan Ramras for carefully reading an earlier draft and making numerous suggestions that improved the exposition. We'd also like to thank Mike Miller for pointing out \cite{James} to us.

\section{Two models for $\Ecom G$}

We will present first our simplicial complex model for $\Ecom G$ when $G$ is a discrete group, and then the simplicial model which works for arbitrary topological groups. Let us briefly recall the definitions.

Given a topological group $G$, the classifying space for commutativity in $G$, $\Bcom G$, is defined in \cite{Ad5} to be the geometric realization of a certain subsimplicial space of $NG$ (here $NG$ is the usual nerve of $G$ thought of as a one-object topological category). Namely, $\Bcom G$ is the geometric realization of the subsimplicial space $(\Bcom G)_{\bullet}$ of $NG$ whose $n$-simplices are given by
\begin{equation}
  (\Bcom G)_n = \{(g_1, \ldots, g_n) \in G^n : \text{the } g_i \text{ pairwise commute}\},
  \label{bcom-n-simplices}
\end{equation}
so that $(\Bcom G)_n \cong \Hom(\Z^n, G)$.

By its very definition $(\Bcom G)_{\bullet}$ comes with a degree-wise inclusion into $NG$ and the geometric realization of this inclusion is called the canonical map $\Bcom G \to BG$. One can define (the homotopy type of) $\Ecom G$ as the homotopy fiber of this canonical map, and that is all that we will need for the simplicial complex model. In the section where we give our simplicial space model, we will use a more specific definition of (the homeomorphism type of) $\Ecom G$ as the realization of a certain simplicial space, which is isomorphic to our model.

\subsection{As a simplicial complex}

In this section, let $G$ be a discrete group and $\Ab(G)$ be the poset of abelian subgroups of $G$.
From the definition of $\Bcom G$ as the geometric realization of the simplicial set given in (\ref{bcom-n-simplices}), we see that $\Bcom G = \bigcup_{A \in \Ab(G)} BA$, thinking of all of the $BA$ as subspaces of $BG$. Since the collection $\{BA : A \in \Ab(G)\}$ is closed under intersections, this union is also the colimit indexed by the poset $\Ab(G)$, namely $\Bcom G \cong \colim_{A\in\Ab(G)}BA$. The canonical map $\Bcom G\to BG$ is clearly the map out of the colimit induced by the inclusions $BA \to BG$.

Our first step is to show that this colimit is also a homotopy colimit:
\[\colim_{A\in\Ab(G)}BA\simeq\hocolim_{A\in\Ab(G)}BA.\]
The theory of Reedy model structures gives a simple criterion for when colimits of certain shapes are also homotopy colimits ---see \cite{Reedy} for a nice exposition. In particular, the theory applies to colimits indexed by any poset and in that case says that the colimit is also a homotopy colimit if the map
\[\colim _{A'\subsetneq A\in \Ab(G)}BA'\to BA\] is a cofibration,
which again happens because our collection of spaces is closed under intersections, so that $\colim _{A'\subsetneq A\in \Ab(G)}BA' \cong \bigcup _{A'\subsetneq A\in \Ab(G)}BA'$, and thus the map to $BA$ is the inclusion of a subcomplex.

Now we compute $\Ecom G$ as the homotopy fiber of the canonical map $\hocolim_{A\in\Ab(G)}BA \to BG$. To do this, we use that ``homotopy colimits are universal'', which means that given a map $f \colon X \to Y$ the homotopy pullback functor $f^{\ast} \colon \Top/Y \to \Top/X$ preserves homotopy colimits (\cite[Definition 6.1.1.2, Lemma 6.1.3.14]{HTT}). So, taking homotopy pullbacks of the entire diagram along the base-point inclusion $\ast \to BG$ produces a new diagram whose homotopy colimit is $\Ecom G$. Since the homotopy fiber of the inclusion $BA\to BG$ is the discrete space $G/A$, we get that \[\Ecom G\simeq\hocolim_{A\in\Ab(G)} G/A.\]

Next we need a description of the homotopy colimit of the functor $G/(-)\colon \Ab(G)\to \Set$. Thomason's Theorem (\cite{Thomason}) says that $\hocolim G/(-)$ is the nerve of the Grothendieck construction or category of elements of $G/(-)$. It is straightforward from the definitions that the category of elements of $G/(-)$ is actually just the poset $\AbCo(G) := \{gA : A\in Ab(G)\;\text{ and }g\in G\}$ of cosets of abelian subgroups of $G$ ordered by inclusion. This discussion proves the following proposition.

\begin{proposition}\label{Ecom-NAbCo}
  For a discrete group $G$, we have $\Ecom G \simeq |N(\AbCo(G))|$.
\end{proposition}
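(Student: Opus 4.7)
The plan is to realize $\Ecom G$ as a homotopy colimit indexed by the poset $\Ab(G)$ of abelian subgroups of $G$, and then apply Thomason's theorem to convert that homotopy colimit into a nerve.

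First, I would observe that the description of $(\Bcom G)_\bullet$ in (\ref{bcom-n-simplices}) lets one identify $\Bcom G$ with the union $\bigcup_{A\in\Ab(G)} BA$ inside $BG$. Because $\Ab(G)$ is closed under intersection, this union is the colimit $\colim_{A\in\Ab(G)} BA$, with the canonical map $\Bcom G\to BG$ being the induced map from the colimit. To promote this ordinary colimit to a homotopy colimit I would invoke the Reedy cofibrancy criterion for diagrams over a poset: it suffices that each latching map $\colim_{A'\subsetneq A} BA' \to BA$ be a cofibration, and this holds because again the intersection-closed property means the latching object is literally the subcomplex $\bigcup_{A'\subsetneq A} BA' \subseteq BA$.

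Next I would compute the homotopy fiber of $\hocolim_{A\in\Ab(G)} BA \to BG$ over the basepoint. Using that homotopy colimits are universal in $\Top$ (so homotopy pullback along a fixed map commutes with $\hocolim$), pulling the whole diagram back along $\ast\to BG$ replaces each $BA$ by the homotopy fiber of $BA \to BG$, which is the discrete coset space $G/A$. This yields
\[
\Ecom G \simeq \hocolim_{A\in\Ab(G)} G/A.
\]

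Finally, Thomason's theorem identifies the homotopy colimit of a $\Set$-valued functor with the nerve of its Grothendieck construction. The category of elements of $G/(-)\colon \Ab(G)\to\Set$ has as objects pairs $(A,gA)$ with $A\in\Ab(G)$ and $gA\in G/A$, and a morphism $(A,gA)\to(A',g'A')$ iff $A\subseteq A'$ and $gA\subseteq g'A'$ (i.e.\ $gA\subseteq g'A'$ as subsets of $G$). This is exactly the poset $\AbCo(G)$ of cosets of abelian subgroups of $G$ ordered by inclusion, so $\Ecom G \simeq |N(\AbCo(G))|$.

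I do not expect a genuine obstacle here: each of the three ingredients (Reedy cofibrancy on an intersection-closed poset diagram, universality of $\hocolim$ in $\Top$, and Thomason's theorem) applies almost immediately. The only point deserving care is the last identification of the Grothendieck construction of $G/(-)$ with $\AbCo(G)$, namely checking that a morphism in the category of elements really is the same data as an inclusion of cosets; this is a brief unpacking of definitions rather than a substantive argument.
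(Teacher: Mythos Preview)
Your proposal is correct and follows essentially the same argument as the paper: identify $\Bcom G$ as $\colim_{A\in\Ab(G)} BA$, upgrade to a homotopy colimit via the Reedy latching criterion using intersection-closure, pull back along $\ast\to BG$ using universality of homotopy colimits to get $\hocolim_{A} G/A$, and apply Thomason's theorem to identify the Grothendieck construction of $G/(-)$ with $\AbCo(G)$. The paper's presentation is slightly terser on the final identification, but your unpacking of morphisms in the category of elements is exactly the ``straightforward from the definitions'' step the paper alludes to.
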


This is already progress towards making $\Ecom G$ smaller and more manageable. For example, when $G$ is finite, the poset $\AbCo(G)$ is finite, and $|N(\AbCo(G))|$ is a finite simplicial complex. One immediate improvement is to use only the cosets for maximal abelian subgroups and their intersections.

\begin{proposition}\label{mabco}
  Let $\mAbCo(G)$ be the subposet of $\AbCo(G)$ whose elements are cosets of arbitrary intersections of maximal abelian subgroups. Then there is a homotopy equivalence $|N(\mAbCo(G))| \simeq |N(\AbCo(G))|$.
\end{proposition}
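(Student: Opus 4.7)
The plan is to build a deformation retract of posets from $\AbCo(G)$ onto $\mAbCo(G)$. For any abelian subgroup $A \leq G$, Zorn's lemma provides at least one maximal abelian subgroup containing $A$; let $\tilde{A}$ denote the intersection of \emph{all} such maximal abelian subgroups. By construction, $\tilde{A}$ is an intersection of maximal abelian subgroups (hence lies in the family defining $\mAbCo(G)$), is abelian, and satisfies $A \subseteq \tilde{A}$. A key observation is that if $A$ is itself an intersection of maximal abelian subgroups $\bigcap_j M_j$, then each $M_j$ is a maximal abelian containing $A$ and therefore appears in the intersection defining $\tilde{A}$; combined with $A \subseteq \tilde{A}$ this forces $\tilde{A} = A$.

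I would then define $r \colon \AbCo(G) \to \mAbCo(G)$ by $r(gA) = g\tilde{A}$. Well-definedness on cosets is immediate since $A \subseteq \tilde{A}$. To see $r$ is a poset map, suppose $gA \subseteq hB$; up to left translation we may assume $h = e$, giving $g \in B$ and $A \subseteq B$. Every maximal abelian subgroup containing $B$ also contains $A$, so $\tilde{A} \subseteq \tilde{B} = B$, and therefore $g\tilde{A} \subseteq B = \tilde{B}$. Writing $i$ for the inclusion $\mAbCo(G) \hookrightarrow \AbCo(G)$, the fixed-point observation above gives $r \circ i = \mathrm{id}_{\mAbCo(G)}$, while $i \circ r$ sends $gA$ to $g\tilde{A} \supseteq gA$, producing a pointwise inequality $\mathrm{id}_{\AbCo(G)} \leq i \circ r$.

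Viewing these posets as categories, the pointwise inequality $\mathrm{id} \leq i \circ r$ is a natural transformation, which upon taking nerves yields a simplicial homotopy $|N(\mathrm{id})| \simeq |N(i \circ r)|$. Together with $r \circ i = \mathrm{id}$, this shows $|N(i)|$ and $|N(r)|$ are mutually inverse homotopy equivalences, proving the proposition.

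Nothing in the argument is delicate; the only step requiring some care is tracking coset representatives when checking that $r$ respects containment. An equivalent route would be to invoke Quillen's Theorem A: for each $gA \in \AbCo(G)$, the slice $gA \downarrow i$ consists of cosets in $\mAbCo(G)$ containing $gA$, and the same analysis shows $g\tilde{A}$ is a minimum element of that slice, hence its nerve is contractible.
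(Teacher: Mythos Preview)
Your argument is correct in spirit and lands on the same key construction as the paper: for an abelian $A\le G$, take $\tilde A$ to be the intersection of all maximal abelian subgroups containing $A$, and observe that $g\tilde A$ is the smallest element of $\mAbCo(G)$ lying above $gA$. The paper then simply invokes Quillen's Theorem~A, using that $g\tilde A$ is a least element of the relevant slice; you instead package the same observation as an explicit monotone retraction $r(gA)=g\tilde A$ with $\mathrm{id}\le i\circ r$, which is a Galois-connection/deformation-retract argument and is marginally more elementary since it avoids citing Theorem~A. You even note the Quillen~A route at the end, so you have in fact written down the paper's proof as well.

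One small slip to fix: in checking that $r$ is order-preserving you write ``$\tilde A\subseteq\tilde B=B$'' and ``$g\tilde A\subseteq B=\tilde B$'', but $\tilde B=B$ is false in general. The correct line is: from $A\subseteq B$ one gets $\tilde A\subseteq\tilde B$, and since $g\in B\subseteq\tilde B$ one has $g\tilde A\subseteq g\tilde B=\tilde B$. With that correction the proof goes through.
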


\begin{proof}
  We apply Quillen's Theorem A \cite{Quillen} which says, in the special case of an inclusion of posets, that  $\mAbCo(G) \hookrightarrow \AbCo(G)$ induces a homotopy equivalence provided that for all $C \in \AbCo(G)$, the poset $P_C := \{C' \in \mAbCo(G) : C \subseteq C'\}$ has contractible nerve.

  Let $A$ be an abelian subgroup of $G$ and let $M$ be the intersection of all maximal abelian subgroups of $G$ that contain $A$. Then if $M' \supseteq A$ is any intersection of some maximal abelian subgroups of $G$, we have $M \subseteq M'$. This implies that the poset $P_{gA}$ has $gM$ as least element, and therefore has contractible nerve.
\end{proof}

But we can find an even smaller model, for which we will use the next definition.

\begin{definition}
Let $G$ be a group. We say that a finite subset $S=\{s_0,\ldots,s_n\}$ of $G$ is \emph{affinely commutative} if any of the following equivalent conditions hold:
\begin{enumerate}
\item The consecutive quotients $s_0^{-1}s_1,s_1^{-1}s_{2},\ldots,s_{n-1}^{-1}s_{n}$ pairwise commute.
\item $\langle s_i^{-1}s_j:s_i,s_j\in S\rangle$ is abelian.
\item $S$ is contained in a single coset of some abelian subgroup of $G$.
\end{enumerate}
\end{definition}


We will use the Nerve Theorem to give a simpler description of $|N(\AbCo(G))|$. Recall the statement:

\begin{theorem*}[The Nerve Theorem]
  Let $\mathcal{A}$ be a cover of a CW-complex $X$ by subcomplexes. If every intersection of a finite non-empty subset of $\mathcal{A}$ is either empty or contractible, then $X$ is homotopy equivalent to the simplicial complex with vertices $\mathcal{A}$ and simplices given by finite non-empty subsets of $\mathcal{A}$  with non-empty intersection.
\end{theorem*}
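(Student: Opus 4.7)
The plan is to exhibit $X$ as the homotopy colimit of the diagram of nonempty finite intersections of elements of $\mathcal{A}$, and then collapse each contractible intersection to a point, leaving only the combinatorial data of which intersections are nonempty.

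More precisely, let $P$ denote the poset whose elements are finite nonempty subsets $\sigma\subseteq\mathcal{A}$ such that $X_\sigma:=\bigcap_{A\in\sigma}A$ is nonempty, ordered by reverse inclusion. I would consider the diagram $D\colon P\to\Top$ sending $\sigma\mapsto X_\sigma$ with subspace inclusions as structure maps. The goal is to produce a chain of weak equivalences
\[
X \;\simeq\; \hocolim_{\sigma\in P} X_\sigma \;\simeq\; \hocolim_{\sigma\in P} \pt \;\cong\; |N(P)|
\]
and then recognize $|N(P)|$ as a subdivision of the simplicial complex appearing in the statement.

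The second weak equivalence is essentially formal: each $X_\sigma$ is contractible by hypothesis, so the unique natural transformation $D\Rightarrow\pt$ is an objectwise weak equivalence and induces a weak equivalence of homotopy colimits. The third step is combinatorial: $P$ is naturally identified with the face poset of the simplicial complex $N(\mathcal{A})$ described in the theorem, and the nerve of a face poset is the barycentric subdivision of the original complex, hence homeomorphic to it. The third weak equivalence could alternatively be avoided by indexing $D$ directly over the simplices of $N(\mathcal{A})$ viewed as a category (using a Bousfield--Kan style formula that produces $|N(\mathcal{A})|$ directly after collapsing contractible pieces).

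The main obstacle is the first equivalence $\hocolim D\simeq X$, which is the technical heart of the nerve theorem. The hypothesis that $\mathcal{A}$ covers $X$ by subcomplexes is used here to ensure that all inclusions $X_\sigma\hookrightarrow X_\tau$ (for $\tau\subseteq\sigma$) are cofibrations between CW-complexes, so the Reedy cofibrancy criterion implies that the ordinary colimit already computes the homotopy colimit of $D$. Unraveling the ordinary colimit: since $\mathcal{A}$ covers $X$ and every intersection is included in exactly the subsets of $\mathcal{A}$ that contain it, the colimit of $D$ along $P$ is exactly $X$ as a CW-complex. This establishes the first equivalence and completes the proof. The subcomplex hypothesis is therefore doing the work of a ``good cover'' assumption: without it one would have to use a homotopy-theoretic hypercover argument instead.
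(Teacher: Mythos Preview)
The paper does not prove the Nerve Theorem; it is merely recalled as a classical result and used as a black box to pass from $|N(\AbCo(G))|$ to $\AfCom(G)$. So there is no proof in the paper to compare against.

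Your outline is the standard homotopy-colimit proof and is essentially correct. A couple of remarks on places where you are a little quick. First, the identification $\colim_P D \cong X$ uses not only that $\mathcal{A}$ covers $X$ set-theoretically but also that $X$ carries the weak topology with respect to the subcomplexes in $\mathcal{A}$; this follows because every cell of $X$ lies entirely in some $A\in\mathcal{A}$. Second, your appeal to Reedy cofibrancy is exactly the same mechanism the paper itself invokes (citing \cite{Reedy}) when showing $\colim_{A\in\Ab(G)}BA\simeq\hocolim_{A\in\Ab(G)}BA$: the latching map at $\tau\in P$ is the inclusion $\bigcup_{\sigma\supsetneq\tau} X_\sigma \hookrightarrow X_\tau$, which is an inclusion of a subcomplex and hence a cofibration. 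One should note that when $\mathcal{A}$ is infinite the poset $P$ is not literally a Reedy category in the usual sense (there is no ordinal-valued degree function making it direct), so strictly speaking one needs the generalized form for arbitrary posets, or alternatively the Čech/bar resolution argument; but this is a routine technicality and the paper itself glosses over the analogous point. Your identification of $|N(P)|$ with the barycentric subdivision of the nerve complex is correct and completes the argument.
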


For every $g\in G$, let $X_g= \{C \in \AbCo(G) : g \in C\} = \{gA: A\in \Ab(G)\}$; these sub-posets form a cover of $\AbCo(G)$, so that the geometric realization of the nerves, $\{|N(X_g)|: g\in G\}$ is a cover by subcomplexes of $|N(\AbCo(G))|$.

\begin{lemma}\label{lem:AfCom}
Let $S\subseteq G$ be a finite non-empty subset and $X_S=\bigcap_{s\in S} X_s$. Then 
\begin{enumerate}
\item $X_S$ is nonempty if and only if $S$ is affinely commutative;
\item When $S$ is affinely commutative, $X_S=\{sA : \langle s^{-1}S\rangle\subseteq A\in \Ab(G)\}$, for any $s\in S$. Moreover, in this case, $N(X_S)\simeq \pt$.
\end{enumerate}
\end{lemma}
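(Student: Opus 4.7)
My plan is to chase through the definitions carefully, using the three equivalent formulations of ``affinely commutative'' already provided, and then finish by exhibiting a minimum in the poset $X_S$.

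For part (1), I would unwind the definition: $X_S \neq \emptyset$ means there is a coset $gA$ (with $A \in \Ab(G)$) containing all of $S$, i.e. $S$ is contained in a single coset of some abelian subgroup of $G$. This is exactly condition (3) of the definition of affinely commutative, so the equivalence is immediate.

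For the set-theoretic description in part (2), fix any $s \in S$. A coset $gA$ contains $S$ if and only if $g^{-1}S \subseteq A$; but $g$ is determined only up to multiplication on the right by an element of $A$, so without loss of generality we may take $g = s$ and rewrite the condition as $s^{-1}S \subseteq A$, which (since $A$ is a subgroup) is equivalent to $\langle s^{-1}S\rangle \subseteq A$. This gives the stated formula. A brief sanity check: the formula looks like it depends on the chosen base-point $s \in S$, but in fact for any $s, t \in S$ one has $\langle s^{-1}S\rangle = \langle t^{-1}S\rangle$, since any generator $t^{-1}s'$ equals $(s^{-1}t)^{-1}(s^{-1}s')$ and $s^{-1}t \in s^{-1}S$. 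Call this common subgroup $H$; then also $sH = tH$ as cosets (since $s^{-1}t \in H$).

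Finally, for the contractibility statement, I would observe that the poset $X_S$, ordered by inclusion of cosets, has $sH$ as its least element: $sH \in X_S$ because $H$ is abelian (by affine commutativity) and contains $s^{-1}S$; and for any other $sA \in X_S$ the inclusion $H \subseteq A$ gives $sH \subseteq sA$. Any poset with a minimum has contractible nerve (the constant functor at the minimum is a left adjoint to the inclusion of the terminal subposet, inducing a homotopy equivalence on nerves), so $N(X_S) \simeq \pt$.

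The proof is essentially definition-chasing; the only non-formal point is recognizing that $\langle s^{-1}S\rangle$ is independent of the choice of base-point in $S$, which is what makes the description in (2) well-defined and which supplies the canonical minimum $sH$ in the last step.
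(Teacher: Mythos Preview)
Your proof is correct and follows essentially the same approach as the paper: both unwind the definition of $X_S$ to invoke characterization (3) of affine commutativity for part (1), both rewrite any coset $C \supseteq S$ as $sA$ and reduce $S \subseteq sA$ to $\langle s^{-1}S\rangle \subseteq A$, and both conclude contractibility by exhibiting $s\langle s^{-1}S\rangle$ as the minimum of the poset. Your added verification that $\langle s^{-1}S\rangle$ and $s\langle s^{-1}S\rangle$ are independent of the choice of $s$ is a nice touch that the paper leaves implicit.
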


\begin{proof}
  A coset $C\in \AbCo(G)$ belongs to $X_S$ if and only if for each $s \in S$, we have $s \in C$. This shows that $X_S$ is non-empty if and only if $S$ is affinely commutative. Now let $s$ be any element of $S$. If $S \subseteq C \in \AbCo(G)$, then $C$ is a coset of the abelian group $s^{-1}C$. Thus $S \subseteq C \iff s^{-1}S \subseteq s^{-1}C \iff \langle s^{-1}S \rangle \subseteq s^{-1}C$, which proves the formula for $X_S$ in the affinely commutative case and also shows that $X_S$ has the coset $s \langle s^{-1}S \rangle$ as a minimum element, making its nerve contractible. 
\end{proof}

\begin{definition}
  For a group $G$, $\AfCom(G)$ denotes the simplicial complex whose vertices are the elements of $G$ and whose simplices are the affinely commutative subsets of $G$.
\end{definition}

\begin{remark} \label{maxfaces}
  Since a subset of $G$ is affinely commutative if and only if it is contained in a coset of an abelian subgroup of $G$, the maximal simplices in $\AfCom(G)$ are precisely the cosets of the maximal abelian subgroups of $G$. Also, note that the 1-skeleton of $\AfCom(G)$ is the complete graph with vertex set $G$, because every set of one or two elements is affinely commutative.
\end{remark}

\begin{proposition}\label{prop: AfcomEcomDis}
Let $G$ be a discrete group. Then $\Ecom G\simeq|\AfCom(G)|$.
\end{proposition}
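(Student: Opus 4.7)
The plan is to combine Proposition \ref{Ecom-NAbCo} with the Nerve Theorem applied to the cover $\{|N(X_g)| : g \in G\}$ of $|N(\AbCo(G))|$ set up in the paragraph preceding Lemma \ref{lem:AfCom}. By Proposition \ref{Ecom-NAbCo}, it suffices to produce a homotopy equivalence $|N(\AbCo(G))| \simeq |\AfCom(G)|$, and the Nerve Theorem is the natural instrument since Lemma \ref{lem:AfCom} already tells us exactly which intersections $X_S$ are non-empty and that the non-empty ones have contractible nerve.

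First I would check the hypotheses of the Nerve Theorem. Each $X_g$ is a sub-poset of $\AbCo(G)$, so $|N(X_g)|$ is a subcomplex of $|N(\AbCo(G))|$. To see that these subcomplexes cover, note that a simplex of $|N(\AbCo(G))|$ is a chain $C_0 \subsetneq C_1 \subsetneq \cdots \subsetneq C_n$; picking any $g \in C_0$ places the whole chain in $X_g$ since cosets only get larger along the chain. Contractibility (or emptiness) of finite intersections $|N(X_S)|$ is precisely the content of Lemma \ref{lem:AfCom}.

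Once the Nerve Theorem applies, I would identify the resulting nerve complex with $\AfCom(G)$. By construction, the vertices of the nerve are indexed by $G$ (one vertex for each $X_g$), and a finite non-empty subset $S \subseteq G$ is a simplex of the nerve exactly when $X_S \ne \emptyset$. By part (1) of Lemma \ref{lem:AfCom}, this happens precisely when $S$ is affinely commutative, which is the defining condition for simplices of $\AfCom(G)$. Hence the nerve of the cover is literally the simplicial complex $\AfCom(G)$, and the Nerve Theorem yields $|N(\AbCo(G))| \simeq |\AfCom(G)|$, completing the chain of equivalences with $\Ecom G$.

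The main obstacle is nothing more than a routine bookkeeping check that the subcomplexes $|N(X_g)|$ actually cover $|N(\AbCo(G))|$ and that the indexing lines up with the definition of $\AfCom(G)$; the substantive geometric input (contractibility of intersections) is already isolated in Lemma \ref{lem:AfCom}, so the proof should be short.
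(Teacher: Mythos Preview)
Your proposal is correct and follows essentially the same route as the paper's proof: invoke Proposition \ref{Ecom-NAbCo}, apply the Nerve Theorem to the cover $\{|N(X_g)|\}$ using Lemma \ref{lem:AfCom} for the contractibility of intersections, and identify the resulting nerve complex with $\AfCom(G)$. The only difference is that you spell out the verification that the $|N(X_g)|$ actually cover the simplices of $|N(\AbCo(G))|$ (via the least element of a chain), which the paper asserts in the paragraph before Lemma \ref{lem:AfCom} without that explicit argument.
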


\begin{proof}
The Nerve Theorem and Lemma \ref{lem:AfCom} imply that $N(\AbCo(G))$ is homotopy equivalent to the simplicial complex whose vertices are the $X_g$ and where $\{X_{g_0}, \ldots, X_{g_n}\}$ is an $n$-simplex if and only if $X_{g_0}\cap\cdots \cap X_{g_n}\ne\emptyset$, that is, if and only if $\{g_0,\ldots,g_n\}$ is affinely commutative.
\end{proof}

We can use the following standard presentation for the fundamental group of a simplicial complex to obtain a presentation of the fundamental group of $\Ecom G$.

\begin{lemma}\label{pi1-simplicial-complex}
  Let $K$ be a connected simplicial complex. Then the fundamental group of $K$ has the following presentation where the generators correspond to edges of $K$ and the relations to triangles in $K$:
\[\begin{aligned}
    \pi_1(K) = \langle x_{u,v} : \{u,v\} \in K \mid\; &
    x_{v,v}=1,\;x_{v,u}=x_{u,v}^{-1},\\
    & x_{u,v}x_{v,w}=x_{u,w} \;\mathrm{for}\;\{u,v,w\}\in K\rangle.
  \end{aligned}\]
\end{lemma}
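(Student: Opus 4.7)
The plan is to identify the presented group $\Gamma$ with the classical edge-path group of $K$, a standard model of $\pi_1(K, v_0)$ for a connected simplicial complex. I would fix a basepoint $v_0$ and a spanning tree $T$ of the $1$-skeleton, and for each vertex $u$ choose the unique edge-path $\tau_u$ in $T$ from $v_0$ to $u$. Then I would exhibit mutually inverse homomorphisms between $\Gamma$ and $\pi_1(K, v_0)$.

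In one direction, $\phi \colon \Gamma \to \pi_1(K, v_0)$ is defined on generators by $\phi(x_{u,v}) = [\tau_u \cdot (u,v) \cdot \tau_v^{-1}]$. The three families of relations are preserved: $x_{v,v}=1$ because $(v,v)$ is the constant path; $x_{v,u}=x_{u,v}^{-1}$ because $(v,u)$ reverses $(u,v)$; and the triangle relation $x_{u,v}x_{v,w}=x_{u,w}$ because the boundary of a $2$-simplex is null-homotopic, making $(u,v)(v,w)$ homotopic to $(u,w)$ rel endpoints.

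In the reverse direction, $\psi\colon\pi_1(K,v_0)\to\Gamma$ sends the class of an edge-loop $(v_0=u_0,u_1,\ldots,u_n=v_0)$ to the word $x_{u_0,u_1}\, x_{u_1,u_2}\cdots x_{u_{n-1},u_n}$. Well-definedness on edge-path homotopy classes is immediate from the relations: a backtrack $(u,v,u)$ contributes $x_{u,v}x_{v,u}=x_{u,u}=1$, and a triangle move across a $2$-simplex $\{u,v,w\}$ replaces the factor $x_{u,v}x_{v,w}$ by $x_{u,w}$.

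Finally, one checks that $\phi$ and $\psi$ are mutually inverse. The composite $\phi\circ\psi$ equals the identity on $\pi_1(K,v_0)$ because the loop $(u_0,\ldots,u_n)$ is edge-path homotopic to the concatenation of the elementary loops $\tau_{u_i}(u_i,u_{i+1})\tau_{u_{i+1}}^{-1}$, by inserting and cancelling tree backtracks. The composite $\psi\circ\phi(x_{u,v})$ is the word obtained by writing $\tau_u(u,v)\tau_v^{-1}$ as a product of generators, which must then be collapsed back to $x_{u,v}$ using the inverse and triangle relations along the tree. This last collapse will be the main technical obstacle, and it is handled by induction on the lengths of $\tau_u$ and $\tau_v$.
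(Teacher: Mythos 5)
Your two homomorphisms are set up correctly as far as they go: $\phi$ does respect all three families of relations, $\psi$ is well defined on edge-path homotopy classes, and $\phi\circ\psi=\mathrm{id}$, so the presented group retracts onto $\pi_1(K,v_0)$. But the step you deferred---collapsing $\psi(\phi(x_{u,v}))$, the word spelled out along $\tau_u\cdot(u,v)\cdot\tau_v^{-1}$, back to $x_{u,v}$---is not a technical point to be settled by induction; it is impossible in general. The relation $x_{a,b}x_{b,c}=x_{a,c}$ is only available when $\{a,b,c\}$ is a simplex of $K$, and two consecutive edges of a tree path need not span a $2$-simplex, so the relations give you no way to shorten the word. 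Indeed the presentation as stated (which is also how the paper states it, without proof) defines a strictly larger group: if $K$ is a single $1$-simplex $\{a,b\}$, the relations reduce to $x_{a,a}=x_{b,b}=1$ and $x_{b,a}=x_{a,b}^{-1}$, so the presented group is infinite cyclic while $\pi_1(K)=1$; for the full $2$-simplex one gets a free group of rank $2$. In general the presented group is $\pi_1$ of the quotient of $K$ by its vertex set (one vertex, a loop per edge, a $2$-cell per triangle), which is $\pi_1(K)\ast F(T)$ with $F(T)$ free on the edges of a spanning tree $T$; your $\phi$ is exactly the retraction killing the free factor $F(T)$, hence is not injective, and no argument can make $\psi\circ\phi=\mathrm{id}$.

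The repair is the classical one, and it is precisely what your argument already wants: add the relations $x_{u,v}=1$ for $\{u,v\}\in T$. With those relations your maps become mutually inverse---in $\psi(\phi(x_{u,v}))$ every tree letter is trivial, so the word collapses to $x_{u,v}$ with no induction---and the proof is then the standard edge-path-group argument found in textbook treatments. The statement of the lemma (and of Corollary \ref{pi1-Ecom}) should be amended in the same way. This does not endanger the application: in $\AfCom(G)$ every pair $\{e,g\}$ is an edge, so one may take $T$ to be the star on the identity and add the relations $x_{e,g}=1$; since $[e,g]=1$, the assignment $x_{g,h}\mapsto[g,h]$ still kills these relations, hence descends to $\pi_1(\Ecom G)$ and remains surjective, so Theorem \ref{T1} goes through unchanged.
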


\begin{corollary}\label{pi1-Ecom}
  For a discrete group $G$, $\pi_1(\Ecom G)$ has the following presentation with generators for all pairs of group elements and relations coming from affinely commutative triples:
\[\begin{aligned}
    \pi_1(\Ecom G) = \langle x_{g,h} :\; &
    x_{g,g}=1,\;x_{g,h}=x_{h,g}^{-1},\text{ and }x_{g,h}x_{h,k}=x_{g,k} \\
  & \text{ when }\{g,h,k\}\text{ is affinely commutative}\rangle.
\end{aligned}\]
\end{corollary}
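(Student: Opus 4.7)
My plan is to derive this presentation as a direct translation of Lemma~\ref{pi1-simplicial-complex} applied to the simplicial complex $\AfCom(G)$, using Proposition~\ref{prop: AfcomEcomDis} to transfer the presentation to $\pi_1(\Ecom G)$.

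First I would invoke Proposition~\ref{prop: AfcomEcomDis} to replace $\pi_1(\Ecom G)$ with $\pi_1(|\AfCom(G)|)$. Before applying Lemma~\ref{pi1-simplicial-complex}, I need $\AfCom(G)$ to be connected; this is immediate from Remark~\ref{maxfaces}, which observes that the $1$-skeleton of $\AfCom(G)$ is the complete graph on the vertex set $G$ (every one- or two-element subset of $G$ is trivially affinely commutative). In particular, for every pair $(g,h)$ of elements of $G$, the edge $\{g,h\}$ belongs to $\AfCom(G)$, so Lemma~\ref{pi1-simplicial-complex} produces a generator $x_{g,h}$ for each such pair with no restriction.

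Next I would identify the relations. The degenerate relations $x_{g,g}=1$ and $x_{g,h}=x_{h,g}^{-1}$ are exactly the first two families of relations from Lemma~\ref{pi1-simplicial-complex}, specialized to vertices $u=v=g$ and $(u,v)=(g,h)$. For the triangle relations $x_{u,v}x_{v,w}=x_{u,w}$, I would note that by definition a three-element set $\{g,h,k\}\subseteq G$ is a $2$-simplex of $\AfCom(G)$ if and only if it is affinely commutative, so the triangle relations from the lemma are exactly the relations $x_{g,h}x_{h,k}=x_{g,k}$ for affinely commutative triples $\{g,h,k\}$.

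There is really no obstacle here; the content is entirely in the earlier results. The only small point to be careful about is that the lemma's generator $x_{u,v}$ requires $\{u,v\}\in K$, and one must verify this is automatic for $K=\AfCom(G)$ so that the presentation has one generator $x_{g,h}$ for every ordered pair in $G\times G$, which is exactly what Remark~\ref{maxfaces} provides.
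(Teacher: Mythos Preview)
Your proposal is correct and matches the paper's intended argument: the corollary is stated without proof precisely because it follows immediately from Proposition~\ref{prop: AfcomEcomDis} and Lemma~\ref{pi1-simplicial-complex}, with Remark~\ref{maxfaces} supplying connectedness and the fact that every pair is an edge. There is nothing to add.
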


To prove one of our main results we need the following technical and possibly surprising lemma.

\begin{lemma}\label{lem:afftrip}
Let $\{g,h,k\}$ be an affinely commutative triple. Then $[g,h][h,k]=[g,k]$.
\end{lemma}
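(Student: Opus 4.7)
The plan is a direct algebraic verification in $G$. The hypothesis that $\{g,h,k\}$ is affinely commutative unpacks, via condition (1) of the definition, to: the consecutive quotients $u := g^{-1}h$ and $v := h^{-1}k$ commute in $G$. So my first step is to reparametrize, writing $h = gu$ and $k = guv$, with the single relation $uv = vu$ at my disposal. This substitution is the key conceptual move: it concentrates the noncommutative ``base point'' $g$ on the left and pushes the commuting pair $u, v$ to the right, which is exactly the structure of the three elements lying in a common coset of an abelian subgroup.

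Then I would expand each of the three commutators using $[x,y] = x^{-1}y^{-1}xy$. A short cancellation gives
\[
[g,h] = g^{-1}u^{-1}gu, \qquad [h,k] = u^{-1}g^{-1}v^{-1}guv, \qquad [g,k] = g^{-1}v^{-1}u^{-1}guv,
\]
where the middle expression is obtained by substituting $h = gu$, $k = hv$ into $h^{-1}k^{-1}hk$ and collapsing the resulting product.

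Finally, I would multiply the first two and simplify:
\[
[g,h][h,k] \;=\; g^{-1}u^{-1}gu \cdot u^{-1}g^{-1}v^{-1}guv \;=\; g^{-1}u^{-1}v^{-1}guv.
\]
The only remaining step is to invoke $uv = vu$, which is equivalent to $u^{-1}v^{-1} = v^{-1}u^{-1}$, yielding $g^{-1}v^{-1}u^{-1}guv = [g,k]$, as desired.

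There is no real obstacle here; the lemma is a pure identity in $G$ once the right variables are introduced. I considered an alternative route via the coset parametrization $g = xa$, $h = xb$, $k = xc$ with $a,b,c$ in an abelian subgroup $A$, or via commutator identities like $[xy,z] = [x,z]^y[y,z]$, but both lead to bulkier calculations than the direct one above. The single insight worth highlighting is that the identity is really a cocycle condition $c(g,h) + c(h,k) = c(g,k)$ on affinely commutative triples, which is exactly what is required for $(g,h) \mapsto [g,h]$ to extend to a simplicial map $\cm_\bullet \colon \AfCom_\bullet(G) \to N[G,G]$.
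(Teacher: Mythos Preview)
Your proof is correct and follows essentially the same approach as the paper's: a direct expansion of $[g,h][h,k]$ followed by a single swap of two commuting elements. The paper's version is marginally more compact---it works directly with $h^{-1}g$ and $k^{-1}h$ without renaming, writing $[g,h][h,k] = g^{-1}(h^{-1}g)(k^{-1}h)k = g^{-1}(k^{-1}h)(h^{-1}g)k = [g,k]$ in one line---but your substitution $u = g^{-1}h$, $v = h^{-1}k$ makes the same move explicit and is equally valid.
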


(Our convention for the commutator is that $[x,y]=x^{-1}y^{-1}xy$.)

\begin{proof}
By hypothesis, $[g^{-1}h,h^{-1}k]=[h^{-1}g,k^{-1}h]=e$, and thus \[[g,h][h,k] = g^{-1}(h^{-1}g)(k^{-1}h)k = g^{-1}(k^{-1}h)(h^{-1}g)k = g^{-1}k^{-1}gk.\]
\end{proof}

\begin{theorem}\label{T1}
The map $\pi_1(\Ecom G)\to [G,G]$ generated by $x_{g,h}\mapsto [g,h]$ is a surjective homomorphism. In particular, $G$ is abelian if and only if $\pi_1(\Ecom G)=1$.
\end{theorem}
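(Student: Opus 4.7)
The plan is to exploit the presentation of $\pi_1(\Ecom G)$ given in Corollary \ref{pi1-Ecom}. To produce a homomorphism $\varphi\colon \pi_1(\Ecom G)\to [G,G]$ sending $x_{g,h}\mapsto [g,h]$, it suffices to check that the three families of defining relations are respected by this assignment. First I would observe that $[g,g]=g^{-1}g^{-1}gg=1$, handling the degenerate relation. Next, a direct calculation gives $[g,h]^{-1}=(g^{-1}h^{-1}gh)^{-1}=h^{-1}g^{-1}hg=[h,g]$, taking care of the symmetry relation $x_{g,h}=x_{h,g}^{-1}$. The remaining (and only nontrivial) family of relations, the cocycle relation $x_{g,h}x_{h,k}=x_{g,k}$ for affinely commutative triples $\{g,h,k\}$, is precisely the content of Lemma \ref{lem:afftrip}. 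So all three families are preserved and $\varphi$ is a well-defined homomorphism from $\pi_1(\Ecom G)$ into $[G,G]$.

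For surjectivity, every commutator $[g,h]\in [G,G]$ is literally in the image of $\varphi$, being the image of the generator $x_{g,h}$. Since commutators generate $[G,G]$, the image of $\varphi$ is all of $[G,G]$, establishing the first assertion.

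For the ``in particular'' statement, one direction is immediate from the general theory recalled in the introduction: if $G$ is abelian, then $\Bcom G=BG$, the canonical map is the identity, and its homotopy fiber $\Ecom G$ is contractible, so $\pi_1(\Ecom G)=1$. Conversely, if $\pi_1(\Ecom G)=1$, then the surjection $\varphi$ forces $[G,G]=1$, so $G$ is abelian.

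There is no real obstacle: the conceptual work has already been done in Corollary \ref{pi1-Ecom} (which converts the topological question into a presentation problem) and Lemma \ref{lem:afftrip} (which verifies the key cocycle identity for commutators on affinely commutative triples). The proof of the theorem itself is then simply the verification that the three families of relations in the presentation of $\pi_1(\Ecom G)$ map to true identities in $[G,G]$, followed by the trivial observation that commutators generate the commutator subgroup.
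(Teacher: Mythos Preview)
Your proof is correct and follows the same approach as the paper's: verify via the presentation of Corollary~\ref{pi1-Ecom} that $x_{g,h}\mapsto [g,h]$ respects the three families of relations (using Lemma~\ref{lem:afftrip} for the only nontrivial one), then observe that the image contains every commutator. Your write-up simply spells out in more detail the ``remaining relations'' that the paper dismisses as obvious, and makes the ``in particular'' clause explicit.
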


\begin{proof}
  We claim that $x_{g,h} \mapsto [g,h]$ is a well defined homomorphism. Lemma \ref{lem:afftrip} says that the commutator preserves the relation $x_{g,h}{x}_{h,k}=x_{g,k}$ for an affine commutative triple $g,h,k$. The remaining relations are obviously preserved by the commutator. The image of this homomorphism includes all commutators, and thus it is surjective.
\end{proof}

\begin{remark}
  As a curiosity, we can use Theorem \ref{T1} to produce a statement that is equivalent to the Feit--Thompson Theorem. Let $\phi_G$ be the homomorphism between the abelianizations of $\pi_1(\Ecom G)$ and $[G,G]$ induced by the homomorphism appearing in Theorem \ref{T1}. The Feit--Thompson Theorem is equivalent to the claim that $\phi_G$ is nonzero for all non-abelian groups $G$ of odd order. Indeed, by Theorem \ref{T1}, $\phi_G$ is always surjective, so if $\phi_G$ is ever zero for some $G$, then $[G,G]/[[G,G],[G,G]] = 1$ and thus $G$ cannot be solvable. On the other hand, if $\phi_G$ is nonzero for all non-abelian groups of odd order, then $[G,G]/[[G,G],[G,G]] \neq 1$ for all non-abelian groups of odd order, so an easy induction argument proves they are all solvable.
\end{remark}

\subsubsection{The homotopy type of $\Ecom G$ for some finite groups $G$}
  A distinct advantage of the simplicial complex $\AfCom(G)$ is its amenability to computer calculations, at least when $G$ is a finite group. For example, one might ask when $\Ecom G$ is an Eilenberg--MacLane space for a finite group $G$, as the authors of \cite{Ad5} asked about $\Bcom G$. The first examples of non-Eilenberg--MacLane $\Bcom G$ were found by Okay \cite[Section 8]{Okay}, who showed that for either of the extraspecial groups of order 32, one has $\pi_1(\Bcom G) \cong G \times \Z/2$ and $\pi_2(\Bcom G) \cong \Z^{151}$ (!). In the course of working out that example, Okay shows that for those groups, $\Ecom G$ isn't an Eilenberg--MacLane space either---instead it has $\pi_1$ and $\pi_2$ given by $\Z/2$ and $\Z^{151}$---and in fact he shows that there is a homotopy fiber sequence
  \[ \bigvee^{151} S^{2} \to \Ecom G \to B(\Z/2,1).\]

  By using either the presentation of $\pi_1(\Ecom G)$ in Corollary \ref{pi1-Ecom} or even the definition of $\AfCom(G)$ directly, we can easily find more groups for which $\Ecom G$ is not an Eilenberg--MacLane space. Indeed, if for a finite group $G$ we see that $\pi_1(\Ecom G)$ has torsion, then $\Ecom G$ cannot be a $K(\pi,1)$, since the torsion element would force it to have non-zero cohomology in arbitrarily high degrees but $\AfCom(G)$ is a finite simplicial complex. Using SageMath \cite{Sage}, which has excellent support for finite simplical complexes, one can define $\AfCom$ as follows:

\begin{verbatim}
abelian_cosets = lambda G: [C for H in G.subgroups() if H.is_abelian()
                              for C in G.cosets(H)]
AfCom = lambda G: SimplicialComplex(abelian_cosets(G), maximality_check=True)
\end{verbatim}

After giving that definition, \verb|AfCom(G).fundamental_group()| produces an explicit presentation of $\pi_1(\AfCom(G))$, and one can also compute $H_d(\Ecom G; \Z)$ with \verb|AfCom(G).homology(dim=d)|.

For example, inspecting the results of \verb|AfCom(SymmetricGroup(5)).fundamental_group()| shows that $\pi_1(\Ecom S_5)$ contains elements of order two, and thus $\Ecom S_5$ is not an Eilenberg--MacLane space. In this way, we also corroborated that $\Ecom G$ is not a $K(\pi,1)$ when $G$ is either of the extraspecial groups of order $32$. Both the fundamental group and homology computations seem to be very memory intensive so this method doesn't work in practice except for fairly small groups.

\bigskip

For groups with very few abelian subgroups, it is not too hard to compute the homotopy type of $\Ecom G$ by hand using either $|\AfCom(G)|$ or $|N(\mAbCo(G))|$. Here is an example of each.

\begin{example}
  We have $\Ecom S_3 \simeq \bigvee^8 S^1$. Indeed, by Remark \ref{maxfaces}, $\AfCom(S_3)$ has as 1-skeleton the complete graph on 6 vertices and beyond that has only two simplices, which are disjoint triangles corresponding to the two cosets of $A_3$. The union of those two triangles and an edge connecting them is contractible, collapsing that subcomplex to a point leaves a wedge of $8$ circles.
\end{example}

\begin{example}
  Let $Q_{2^n}$ be the generalized quaternion group of order $2^n$. It is not hard to show that its maximal abelian subgroups are as follows: there is one cyclic subgroup $M$ of order $2^{n-1}$, and every $x \notin M$ generates a cyclic subgroup of order $4$. The intersection of any two of these maximal abelian subgroups is the center $Z$ of $Q_{2^n}$, which is of order $2$. Each of those cyclic groups of order $4$ is generated by two elements that do not belong to $M$: if $x$ is a generator, then $x^3$ is the other. In particular, there are $2^{n-2}$ such maximal abelian subgroups of order $4$.

  This means that the poset $\mAbCo(Q_{2^n})$ has rank 2, the minimal elements are the $2^{n-1}$ cosets of $Z$, and all other elements are maximal: the $2$ cosets of $M$ and the $2^{n-2}$ cosets of each of the $2^{n-2}$ subgroups or order $4$ generated by an element not in $M$.
  
  Because the poset $\mAbCo(Q_{2^n})$ only has rank 2, its nerve is a graph, namely, the nerve is the same as the Hasse diagram, and thus $N(\mAbCo(Q_{2^n}))$ is homotopy equivalent to a wedge of circles. The number of circles is given by $e - v + 1$, where $v$ and $e$ are the number of vertices and edges in the graph. We've listed the vertices, so we find that $v = 2^{n-1} + 2 + 2^{n-2} \cdot 2^{n-2} = 2^{2n-4} + 2^{n-1} + 2$. Now we count the edges: each coset of $M$ contains $2^{n-2}$ cosets of $Z$, these provide $2 \cdot 2^{n-2}$ edges; each coset of a cyclic group of order 4 generated by an element not in $M$ contains $2$ cosets of $Z$, contributing a further $2^{n-2} \cdot 2^{n-2} \cdot 2$ edges. Thus we get a total of $e = 2^{2n-3}+2^{n-1}$ edges, and therefore $\Ecom Q_{2^n}$ has the homotopy type of a wedge of $e-v+1 = 2^{2n-4} - 1$ copies of $S^1$.
\end{example}

\subsection{The simplicial model}

Let $G$ be a topological group. First we recall the simplicial model for $\Ecom G$ given in \cite{Ad5}. For every $n$, $(\Ecom G)_n=G\times \Hom(\Z^n,G)\subseteq G^{n+1}$; the face maps are given by
\[d_i(g_0,\ldots,g_n)=\left\{\begin{matrix}
(g_0,..,g_{i}g_{i+1},\ldots,g_n)&i<n\\
(g_0,\ldots,g_{n-1})&i=n
\end{matrix}
\right.\]
and for every $0\leq i\leq n$ the degeneracy maps are $s_i(g_0,\ldots,g_n)=(g_0,\ldots,g_{i},e,g_{i+1},\ldots,g_n)$.  Notice that this makes $(\Ecom G)_\bullet$ a simplicial subspace of the simplicial model for $EG$ arising from the bar construction.

For every $n\geq 0$, consider the subspaces 
\[\AfCom_n(G):=\{(g_0,\ldots,g_n) \text{ is affinely commutative}\}\subseteq G^{n+1}.\]
These spaces assemble into a simplicial space, $\AfCom_\bullet(G)$, where the face maps are given by $d_i(g_0,\ldots,g_n)=(g_0,\ldots,\widehat{g_i},\ldots,g_n)$ and the degeneracy by $s_i(g_0,\ldots,g_n)=(g_0,\ldots,g_i,g_i,\ldots,g_n)$.

\begin{proposition}\label{prop:AfcomEcom}
Let $G$ be a topological group. Then the simplicial spaces $(\Ecom G)_\bullet$ and $\AfCom_\bullet$ are isomorphic, in particular, $\Ecom G \cong |\AfCom_\bullet(G)|$.
\end{proposition}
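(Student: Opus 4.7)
The plan is to write down the explicit isomorphism, which is essentially just the change from the ``inhomogeneous'' to the ``homogeneous'' presentation of the bar construction for $EG$. For each $n$, define
\[\phi_n\colon (\Ecom G)_n \longrightarrow \AfCom_n(G),\qquad \phi_n(g_0,g_1,\ldots,g_n)=(g_0,\,g_0g_1,\,g_0g_1g_2,\,\ldots,\,g_0g_1\cdots g_n),\]
with inverse given by taking consecutive quotients,
\[\psi_n(h_0,h_1,\ldots,h_n)=(h_0,\,h_0^{-1}h_1,\,h_1^{-1}h_2,\,\ldots,\,h_{n-1}^{-1}h_n).\]
Both maps are continuous since they are built from multiplication and inversion in $G$.

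Next, I would check that $\phi_n$ and $\psi_n$ land in the claimed subspaces. For $\phi_n$, the consecutive quotients of the image tuple are exactly $g_1,\ldots,g_n$, which pairwise commute by hypothesis, so the image is affinely commutative. Conversely, for $\psi_n$, the entries $h_{i-1}^{-1}h_i$ are precisely the consecutive quotients of an affinely commutative tuple and so pairwise commute, placing the image in $G\times \Hom(\Z^n,G)=(\Ecom G)_n$. The computations $\psi_n\phi_n=\mathrm{id}$ and $\phi_n\psi_n=\mathrm{id}$ are then immediate telescoping.

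Finally, I would verify that $\phi_\bullet$ commutes with faces and degeneracies, which is the only part requiring any care. The idea is that the inhomogeneous face $d_i$ for $i<n$ multiplies $g_i$ and $g_{i+1}$; under $\phi_n$ this corresponds to removing the single partial product $g_0\cdots g_i$ from the sequence, i.e.\ the $i$-th ``deletion'' face in $\AfCom_\bullet(G)$. The face $d_n$ truncates the last entry in $(\Ecom G)_\bullet$, and under $\phi_n$ this corresponds to dropping the last partial product $g_0\cdots g_n$. Dually, the inhomogeneous degeneracy $s_i$ inserts $e$ in position $i+1$, which under $\phi_n$ produces a tuple of partial products in which $g_0\cdots g_i$ is repeated, matching the ``doubling'' degeneracy of $\AfCom_\bullet(G)$.

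The main obstacle here is purely bookkeeping: writing the face/degeneracy checks with the correct indexing conventions given that $(\Ecom G)_n\subseteq G^{n+1}$ is indexed from $0$ to $n$. Once the map is correctly displayed, the simplicial identities fall out from the telescoping identity $(g_0\cdots g_{j-1})^{-1}(g_0\cdots g_j)=g_j$, so no conceptual difficulty is expected.
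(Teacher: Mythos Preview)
Your proposal is correct and takes essentially the same approach as the paper: the paper defines the map $\varphi\colon N\overline{G}\to (EG)_\bullet$, $(g_0,\ldots,g_n)\mapsto (g_0,g_0^{-1}g_1,\ldots,g_{n-1}^{-1}g_n)$, which is exactly your $\psi_n$, and observes that it restricts to an isomorphism between $\AfCom_\bullet(G)$ and $(\Ecom G)_\bullet$. The only cosmetic difference is that the paper packages the ambient isomorphism as the nerve of the indiscrete category on $G$ versus the bar model for $EG$, whereas you write out the inverse and the face/degeneracy checks explicitly.
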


\begin{proof}
Let $\overline{G}$ denote the indiscrete category associated to $G$, that is, the space of objects is $G$ and the space of morphisms is $G\times G$. Consider the nerve $N\overline{G}$. Since there is a unique morphism between every pair of objects in $\overline{G}$, an $n$-simplex in $N\overline{G}$ can be given simply by listing $n+1$ objects, $(g_0, \ldots, g_n)$. There is a simplicial isomorphism $\varphi\colon N\overline{G}\to (EG)_\bullet$ given at each level $n$ by
\[(g_0,\ldots,g_n)\mapsto (g_0,g_0^{-1}g_1,\ldots,g_{n-1}^{-1}g_n).\]
By definition of affine commutativity, $\varphi(\AfCom_n(G))=G\times\Hom(\Z^n,G)$, and thus $\AfCom_\bullet (G)$ is a simplicial model for $\Ecom G$.
\end{proof}

We now present an alternative proof of Proposition \ref{prop:AfcomEcom} for a discrete group $G$. First, for a simplicial complex $K$, we define the simplicial set $\Sing(K)$ by setting
\[\Sing(K)_n:=\{(x_0,\ldots,x_n)\in V(K)^{n+1}:\{x_0,\ldots,x_n\}\in K\}, \]
and letting face and degeneracy maps be given by deleting and repeating the coordinates as usual. The notation is chosen by analogy with the singular simplicial set of a topological space. For a space $X$, the set of $n$-simplices $\Sing(X)_n$ is the set of continuous maps $|\Delta^n| \to X$ from the topological $n$-simplex to $X$. Analogously, for a simplicial complex $K$, the set $\Sing(K)_n$ can be described as consisting of simplicial maps from the simplicial complex $\Delta_\cx^n$ (consisting of all subsets of a set with $n+1$ elements) to $K$.

\begin{proposition}\label{folklore}
  Let $K$ be a simplicial complex. Then $|\Sing(K)| \simeq |K|$.
\end{proposition}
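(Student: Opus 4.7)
The plan is to realize both $|K|$ and $|\Sing(K)|$ as colimits indexed by the poset $P$ of simplices of $K$ ordered by inclusion, and then argue that the natural map between them is a weak equivalence using exactly the homotopy colimit technique employed for Proposition~\ref{Ecom-NAbCo}.

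First I would define the comparison map $p \colon |\Sing(K)| \to |K|$ as the realization of the map sending $(x_0,\ldots,x_n) \in \Sing(K)_n$ with barycentric coordinates $(t_0,\ldots,t_n)$ to $\sum_i t_i\, x_i \in |K|$, which is well-defined because $\{x_0,\ldots,x_n\}$ is a simplex of $K$. Next I would introduce the functors $F, G \colon P \to \Top$ defined by $F(\sigma) = |\sigma|$ and $G(\sigma) = |\Sing(\sigma)|$. Since $K$ and $\Sing(K)$ are each the union of their restrictions to single simplices, we have $|K| = \colim_P F$ and $|\Sing(K)| = \colim_P G$, and $p$ is the realization of a natural transformation $G \Rightarrow F$. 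This natural transformation is a pointwise weak equivalence: each $|\sigma|$ is contractible, and each $|\Sing(\sigma)|$ is the realization of the nerve of the indiscrete category on the vertex set $V(\sigma)$, which is contractible because that category is equivalent to the terminal category.

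I would then verify that both colimits are in fact homotopy colimits using the Reedy criterion recalled just before Proposition~\ref{Ecom-NAbCo}: for each $\sigma \in P$, the maps $\colim_{\tau \subsetneq \sigma} F(\tau) \to F(\sigma)$ and $\colim_{\tau \subsetneq \sigma} G(\tau) \to G(\sigma)$ are respectively the inclusions $\partial |\sigma| \hookrightarrow |\sigma|$ and $\bigcup_{\tau \subsetneq \sigma} |\Sing(\tau)| \hookrightarrow |\Sing(\sigma)|$, both of which are inclusions of subcomplexes and hence cofibrations. A pointwise weak equivalence between Reedy cofibrant diagrams over a direct category induces a weak equivalence on homotopy colimits, so $p$ is a weak equivalence of CW complexes, hence a homotopy equivalence.

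The chief obstacle is confirming that these colimits are genuinely homotopy colimits; once that is in hand the rest is formal, and the argument parallels the discussion preceding Proposition~\ref{Ecom-NAbCo} so closely that it can be dispatched in a sentence. An alternative route would be to apply the Nerve Theorem directly to the cover $\{|\Sing(\sigma)|\}_{\sigma \in P}$ of $|\Sing(K)|$ by contractible subcomplexes (noting that $\Sing$ preserves intersections, so all nonempty finite intersections are again of this form and hence contractible), and compare with the analogous cover of $|K|$ by the $|\sigma|$.
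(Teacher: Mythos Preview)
Your argument is correct, and it is genuinely different from both proofs the paper gives. The paper introduces an auxiliary ordered model $\Sing_{\le}(K)$ and writes
\[
  \Sing_\le(K) \cong \colim_{\Delta^n \to \Sing_{\le}(K)} \Delta^n,\qquad
  \Sing(K) \cong \colim_{\Delta^n \to \Sing_{\le}(K)} \Sing(\Delta_\cx^n),
\]
then checks, working entirely in $\sSets$, that the cosimplicial simplicial sets $\Delta$ and $E = \Sing(\Delta_\cx^{\bullet})$ are Reedy cofibrant in $\Fun(\mathbb{\Delta},\sSets)$; the left Quillen bifunctor property of $\otimes_{\mathbb{\Delta}}$ and Ken Brown then give the weak equivalence. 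The second proof repackages this via Cisinski's model structure on symmetric simplicial sets. You instead index by the face poset $P$ of $K$, a direct category, and verify Reedy cofibrancy of the two $P$-diagrams $\sigma\mapsto|\sigma|$ and $\sigma\mapsto|\Sing(\sigma)|$ in $\Top$. This is more elementary---posets avoid the degeneracy bookkeeping of $\mathbb{\Delta}$---and it matches almost verbatim the argument used earlier in the paper to identify $\Bcom G$ with a homotopy colimit over $\Ab(G)$. The one point worth spelling out is why $\colim_{\tau\subsetneq\sigma}\Sing(\tau)\to\Sing(\sigma)$ is injective: any simplex $(x_0,\ldots,x_n)$ lying in both $\Sing(\tau)$ and $\Sing(\tau')$ already lies in $\Sing(\{x_0,\ldots,x_n\})$, and that face is a common lower bound in the indexing poset. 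The trade-off is that the paper's approach actually establishes the stronger statement $E\otimes_{\mathbb{\Delta}}X\simeq X$ for an arbitrary simplicial set $X$, whereas your argument uses the specific combinatorics of $\Sing_\le(K)$ (namely that its nondegenerate simplices are exactly the faces of $K$). Your Nerve Theorem alternative is also valid and gives perhaps the shortest route of all.
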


\begin{remark}
  This homotopy equivalence is far from being a homeomorphism. For example, for any $n>0$, $|\Sing(\Delta_\cx^n)|$ is homeomorphic to the infinite-dimensional sphere $S^{\infty}$.
\end{remark}

Proposition \ref{folklore} seems to be well-known---at least, both authors knew it and several algebraic topologists we asked knew it as well. However we were unable to locate a reference and thus decided to include a couple of proofs. But first, let's point out how to get Proposition \ref{prop:AfcomEcom} from it.

\begin{proof}[Alternative proof of Proposition \ref{prop:AfcomEcom}]
The key observation is that $\AfCom_\bullet(G) = \Sing(\AfCom(G))$. Then, $|\AfCom_\bullet(G)|\simeq|\AfCom(G)|$ by Proposition \ref{folklore}, and the result follows from Proposition \ref{prop: AfcomEcomDis}.
\end{proof}

In order to prove Proposition \ref{folklore}, let's introduce another standard way of making a simplicial set from a simplicial complex. Pick an arbitrary total order $\le$ on the vertices of $K$ (or even just a partial order whose restriction to each face of $K$ is total). Then we can define $\Sing_{\le}(K)$ to be the simplicial subset of $\Sing(K)$ given by
\[\Sing_{\le}(K)_n:=\{(x_0,\ldots,x_n)\in V(K)^{n+1}:\{x_0,\ldots,x_n\}\in K, x_0 \le \cdots \le x_n\}.\]

It is well-known and easy to see that the geometric realization of $\Sing_{\le}(K)$ is \emph{homeomorphic} to the geometric realization of $K$, so to prove Proposition \ref{folklore} it suffices to prove that we have a homotopy equivalence $\Sing_{\le}(K) \simeq \Sing(K)$. We feel the basic intuition behind Proposition \ref{folklore} is the following easily-verified relationship between $\Sing_{\le}(K)$ and $\Sing(K)$: to obtain $\Sing(K)$ from $\Sing_{\le}(K)$ we replace each $n$-simplex with $\Sing(\Delta_\cx^n)$, or more precisely,
\[ \Sing_\le(K) \cong \colim_{\Delta^n \to \Sing_{\le}(K)} \Delta^n, \quad\text{while}\quad \Sing(K) \cong \colim_{\Delta^n \to \Sing_{\le}(K)} \Sing(\Delta_\cx^n).\]

If we knew both colimits were in fact homotopy colimits, then Proposition \ref{folklore} would follow, since both $\Delta^n$ and $\Sing(\Delta_\cx^n)$ are contractible. For the first colimit this is a standard fact, and what our proofs do, morally, is to verify this for the second colimit.

Note that the simplicial sets $E[n] := \Sing(\Delta_\cx^n)$ depend functorially on $[n] \in \mathbb{\Delta}$ and thus give a cosimplicial simplicial set $E \colon \mathbb{\Delta} \to \sSets$. We can also describe $E[n]$ as the nerve of the indiscrete category with $n+1$ objects (which already shows it is contractible), or as the $0$-coskeleton of the discrete simplicial set with $n+1$ vertices.

\begin{proof}[Proof of Proposition \ref{folklore} using Reedy model structures]
  Let $\Delta \colon \mathbb{\Delta} \to \sSets$ be the canonical cosimplicial simplicial set (the Yoneda embedding). In terms of the functor tensor product,
  \[\otimes_{\mathbb{\Delta}} : \Fun(\mathbb{\Delta}, \sSets) \times \Fun(\mathbb{\Delta}^\op, \mathsf{sSets}) \to \sSets,\]
  we can write the above colimits as $\Sing(K) \cong E \otimes_{\mathbb{\Delta}} \Sing_\le(K)$ and, of course, $\Sing_\le(K) \cong \Delta \otimes_{\mathbb{\Delta}} \Sing_\le(K)$, where we regard a simplicial set $X$ as a functor $\mathbb{\Delta}^\op \to \sSets$, by regarding the set $X_n$ of $n$-simplices as a discrete or constant simplicial set.

  By \cite[Proposition A.2.9.26]{HTT}, the functor tensor product is a left Quillen bifunctor when we equip $\sSets$ with the Quillen model structure and both functor categories with the corresponding Reedy model structure. Recall that all bisimplicial sets are Reedy cofibrant, so for any simplicial set $X$, the functor $- \otimes_{\mathbb{\Delta}} X$ is left Quillen.

Now, both $\Delta^n$ and $E[n]$ are contractible simplicial sets, so the inclusion $\Delta \to E$ is an object-wise weak equivalence. Thus, we need only check that both $\Delta$ and $E$ are Reedy cofibrant to conclude from Ken Brown's lemma that the left Quillen functor $- \otimes_{\mathbb{\Delta}} X$ will send the inclusion to a weak equivalence, as desired.

For $\Delta$ this is well known: the latching map $L_n \Delta \to \Delta^n$ is readily seen to be the inclusion $\partial \Delta^n \to \Delta^n$, a monomorphism and thus a cofibration in $\mathsf{sSet}$. The case of $E$ is very similar. Indeed, the latching object is given by $L_n(E) = \mathop{\mathrm{colim}}_{[k] \hookrightarrow [n]} E[k]$, which one can check consists of all simplices of $E[n]$ that do not involve all $n+1$ vertices, and the canonical map $L_n(E) \to E$ is then a monomorphism.
\end{proof}

Andrea Gagna remarked on MathOverflow that $\Sing(K)$ seems intrinsically linked to symmetric simplicial sets and mentioned \cite[\S 8.3]{Cis}. From the results there one can easily obtain another proof.

\begin{proof}[Proof of Proposition \ref{folklore} using symmetric simplicial sets]
  Let $\mathbb{\Upsilon}$ be the category of finite non-empty sets and all functions between them. The category of symmetric simplicial sets is defined to be $\mathsf{\Sigma Set} := \mathop{\mathrm{Fun}}(\mathbb{\Upsilon}^{\mathrm{op}}, \mathsf{Set})$. There is an obvious functor $v : \mathbb{\Delta} \to \mathbb{\Upsilon}$, including monotone functions into all functions. That functor (thought of as a functor $\mathbb{\Delta}^{\mathrm{op}} \to \mathbb{\Upsilon}^{\mathrm{op}}$) induces adjunctions $v_{!} \dashv v^{\ast} \dashv v_{\ast}$, where $v^{\ast} : \mathsf{\Sigma Set} \to \mathsf{sSet}$ is precomposition with $v$ and $v_{!}$ and $v_{\ast}$ are left and right Kan extension along $v$.

  We can use these functors to relate $\Sing(K)$ with $\Sing_{\le}(K)$, namely, we have $v^{\ast} v_{!} \Sing_\le(K) \cong \Sing(K)$. More generally, for any simplicial set $X$ we have $v^{\ast} v_{!} X \cong E \otimes_{\mathbb{\Delta}} X$. This is not hard to do directly for $\Sing_\le(K)$. A slicker way is to notice that $E[n] \cong v^{\ast}(\Upsilon_n)$ where $\Upsilon_n$ is the representable symmetric simplicial set corresponding to $[n]$. Thus $E[n] \cong v^{\ast}v_{!}\Delta^n$ and since both $v^{\ast}$ and $v_{!}$ are left adjoints we have
\[E \otimes_{\mathbb{\Delta}} X \cong (v^{\ast} \circ v_{!} \circ \Delta) \otimes_{\mathbb{\Delta}} X \cong v^{\ast} v_{!} (\Delta \otimes_{\mathbb{\Delta}} X) \cong v^{\ast} v_{!} X.\]

Moreover, it is straightforward to check that the canonical map $X \to E \otimes_{\mathbb{\Delta}} X$ used in the first proof corresponds to the unit $X \to v^{\ast} v_{!} X$ under the above isomorphism. It only remains to prove the unit is a weak equivalence.

In \cite[\S 8.3]{Cis}, Cisinski proves there is a model structure on $\mathsf{\Sigma Set}$ for which $(v_{!}, v^{\ast})$ is a Quillen equivalence in which $v^{\ast}$ creates the weak equivalences (that is, a map $f$ of symmetric simplicial sets is a weak equivalence if and only if $v^{\ast}(f)$ if a weak equivalence of simplicial sets). When a right Quillen functor creates weak equivalences, it is a right Quillen equivalence if and only if the components of the (underived) unit of the adjunction at cofibrant objects are weak equivalences. Since all simplicial sets are cofibrant, we obtain that $X \to v^{\ast} v_{!} X$ is a weak equivalence, as desired.
\end{proof}

\section{The commutator map $\cm\colon\Ecom G\to B[G,G]$}

We now introduce our main tool for proving $\Ecom G$ is not contractible for certain non-abelian groups: a map $\cm\colon\Ecom G\to B[G,G]$ which we will show is not nullhomotopic in some cases.

Recall the simplicial model of the classifying space $BG$ given by the nerve $NG$, where $G$ is thought of as a one-object category. The face maps $d_i\colon G^n\to G^{n-1}$ are given by multiplying two adjacent coordinates, except for $d_0$ and $d_n$ which drop the first and last coordinate respectively. The degeneracy maps $s_i\colon G^n\to G^{n+1}$ are given by inserting the identity element in the $i^{\mathrm{th}}$ coordinate.

\begin{lemma}
Let $G$ be a topological group, and let $[G,G]$ be its commutator subgroup. Then the maps $\AfCom_n(G)\to [G,G]^n$ given by $(g_0,\ldots,g_n)\mapsto ([g_0,g_1],\ldots,[g_{n-1},g_n])$ assemble to give a simplicial map $\cm_\bullet\colon\AfCom_\bullet(G)\to N[G,G]$ which geometrically realizes to a map $\cm\colon\Ecom G\to B[G,G]$.
\end{lemma}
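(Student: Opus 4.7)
The plan is to verify that the levelwise maps $\cm_n\colon\AfCom_n(G)\to[G,G]^n$ defined by $(g_0,\ldots,g_n)\mapsto([g_0,g_1],\ldots,[g_{n-1},g_n])$ are continuous and commute with all face and degeneracy operators; then the geometric realization gives the desired map using Proposition \ref{prop:AfcomEcom}.

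First I would note the easy observations: each coordinate $[g_{i-1},g_i]$ lies in $[G,G]$ by definition, and $\cm_n$ is continuous as a tuple of group-theoretic words in the coordinates. Then I would check the four kinds of simplicial identities, namely $\cm_{n-1}\circ d_i = d_i\circ\cm_n$ for $0 \le i \le n$ and $\cm_{n+1}\circ s_i = s_i \circ \cm_n$ for $0 \le i \le n$. The outer face identities ($i=0$ and $i=n$) are immediate since both sides simply drop the first or last commutator. The degeneracy identities follow from the fact that in $\AfCom_\bullet(G)$ the map $s_i$ repeats the $i$-th coordinate, so the image under $\cm_{n+1}$ inserts $[g_i,g_i]=e$ in the expected position, matching the nerve degeneracy in $N[G,G]$.

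The main step is verifying the inner face identities $0 < i < n$. Here, $\cm_{n-1}(d_i(g_0,\ldots,g_n))$ has $[g_{i-1},g_{i+1}]$ in the position where $d_i(\cm_n(g_0,\ldots,g_n))$ has the product $[g_{i-1},g_i][g_i,g_{i+1}]$, with all other entries equal. The identity
\[[g_{i-1},g_i]\,[g_i,g_{i+1}] = [g_{i-1},g_{i+1}]\]
is exactly the content of Lemma \ref{lem:afftrip}, applied to the triple $\{g_{i-1},g_i,g_{i+1}\}$, which is affinely commutative because it is a subset of the affinely commutative tuple $(g_0,\ldots,g_n)$. This is really where the definition of affine commutativity pays off, and it is the only non-trivial point in the proof.

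Finally, having established that $\cm_\bullet$ is a simplicial map of simplicial spaces, I would take geometric realizations and appeal to Proposition \ref{prop:AfcomEcom} to identify $|\AfCom_\bullet(G)|$ with $\Ecom G$, obtaining the map $\cm\colon\Ecom G\to B[G,G]$. The only conceptual obstacle is the inner face check, and Lemma \ref{lem:afftrip} has already isolated exactly that commutator identity, so no further technical work is needed.
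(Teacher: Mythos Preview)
Your proposal is correct and takes essentially the same approach as the paper's proof: verify compatibility with the simplicial operators, noting that the degeneracies and the outer faces $d_0,d_n$ are immediate while the inner faces $0<i<n$ reduce to the commutator identity of Lemma~\ref{lem:afftrip}. Your write-up is simply a more explicit version of the paper's two-sentence argument.
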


\begin{proof}
We need to verify that the above maps commute with degeneracy and face maps. For the degeneracy maps and the cases $i=0,1$ of the face maps, this is immediate; the remaining cases follow from Lemma \ref{lem:afftrip}.
\end{proof}

\begin{remark}
  For a discrete group $G$, this map $\cm$ induces on fundamental groups the homomorphism of Theorem \ref{T1}. Furthermore, when $G$ is discrete, that homomorphism is all there is to this map: since $B[G,G] \simeq K([G,G],1)$, the map $\cm \colon \Ecom G \to B[G,G]$ automatically factors through the first Postnikov section of $\Ecom G$, namely, $B \pi_1(\Ecom G)$. However, if $G$ is not discrete, the map $\cm$ does not in general factor through $B \pi_1(\Ecom G)$. For example, Theorem \ref{adcomO(2)} and Proposition \ref{EcomOn} together imply that $\cm : \Ecom O(n) \to BSO(n)$ is nonzero on $\pi_2$ for $n\ge 2$; and Lemma \ref{lem:pi7su2} says that $\cm : \Ecom SU(2) \to BSU(2)$ is nonzero on $\pi_7$.
\end{remark}

\subsection{The map $\cm$ for compact connected Lie groups}

The simplicial models for $\Ecom G$ and $EG$ have the same $1$-skeleton, so that the first terms of the skeletal filtrations of their realizations are the same, $F_1\Ecom G=F_1EG$. Now, recall that $F_1EG \simeq G*G \simeq \Sigma (G\wedge G)$, and that $F_1BG = \Sigma G$. Let $c\colon G\times G\to [G,G]$ denote the commutator map. Since $[x,y]=1$  if either $x=1$ or $y=1$, it follows that $c$ factors through a map $\tilde{c}\colon G\wedge G\to [G,G]$ (which is called the \emph{reduced commutator}). Then we have a homotopy commutative diagram
\begin{equation}\label{cdiagram}
  \begin{gathered}
\xymatrix{
\Ecom G\ar[r]^{\cm}&B[G,G]\\
\Sigma(G\wedge G)\ar[r]^{\Sigma \tilde{c}}\ar[u]&\Sigma [G,G].\ar[u]
 }
\end{gathered}
\end{equation}
 
 The following classical theorem is the key ingredient to prove one of our main results.
 
 \begin{theorem}[Araki, James, Thomas \cite{AJTh}]  \label{comm}
 Let $G$ be a compact connected Lie group. Then $c\colon G\times G\to G$ is nullhomotopic if and only if $G$ is abelian.
 \end{theorem}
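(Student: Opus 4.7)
The easy direction is immediate: if $G$ is abelian then $c$ is the constant map at the identity. For the converse, note that $c \simeq *$ is equivalent to the multiplication on $G$ being homotopy commutative, since $g^{-1}h^{-1}gh \simeq e$ iff $gh \simeq hg$ as maps $G \times G \to G$. So it suffices to show that any homotopy-commutative compact connected Lie group is abelian.

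Since $c$ vanishes on the wedge $G \vee G \subseteq G \times G$, it factors as $\tilde c \colon G \wedge G \to G$, and $\tilde c$ is the \emph{universal Samelson product}: for pointed maps $\alpha \colon X \to G$ and $\beta \colon Y \to G$, naturality gives $\langle \alpha, \beta \rangle = \tilde c \circ (\alpha \wedge \beta)$. Nullhomotopy of $\tilde c$ therefore forces every Samelson product into $G$ to be null, and in particular the graded Samelson bracket on $\pi_\ast(G)$ vanishes identically. The task reduces to exhibiting one nontrivial Samelson product in $\pi_\ast(G)$ whenever $G$ is non-abelian.

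Next I would reduce to the simply-connected simple case, using that the universal cover of $G$ splits as a product of a torus and simply-connected compact simple factors (the torus plays no role in commutators), and that homotopy commutativity passes to each factor via the projections and inclusions. For a simply-connected simple non-abelian $G$, choose a nonzero root and integrate the corresponding $\mathfrak{sl}_2$-subalgebra to a continuous homomorphism $j \colon SU(2) \to G$. The Samelson square in $SU(2) = S^3$ is classically $\langle \iota_3, \iota_3 \rangle = \pm 2\nu$, a nonzero element of $\pi_6(S^3) = \Z/12$ (detected by the Hopf invariant of $\nu$, or via the EHP sequence). By naturality $\langle j, j \rangle = j_\ast(\pm 2\nu) \in \pi_6(G)$.

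The main obstacle is to verify that $j_\ast(2\nu) \ne 0$, and more broadly that \emph{some} Samelson product in $\pi_\ast(G)$ is nontrivial for each non-abelian simple $G$. This is not automatic: for instance $\pi_6(SU(n)) = 0$ when $n \ge 4$, so in those cases one must instead locate a nontrivial Samelson product between higher-degree generators of $\pi_\ast(G)$ (for example $\langle \iota_3, \iota_{2k-1} \rangle$ landing in $\pi_{2k+1}(SU(n))$). Carrying this out requires case-by-case analysis using the tabulated unstable homotopy groups of the simply-connected simple compact Lie groups, which is the technical heart of the Araki--James--Thomas theorem.
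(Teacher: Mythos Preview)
The paper does not prove this theorem; it is quoted from \cite{AJTh} and used as a black box in the proof of Theorem~\ref{ThGLie}. So there is nothing in the paper to compare your argument against.

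As a standalone sketch, your approach has the right shape---reduce to showing that a homotopy-commutative compact connected Lie group is abelian, interpret $\tilde c$ as the universal Samelson product, pass to the simply-connected simple factors---but it is, as you yourself say, not a proof. The hard step is exactly the one you leave open: producing a nonzero Samelson product in $\pi_\ast(G)$ for \emph{every} simply-connected compact simple $G$. Your first attempt, pushing the Samelson square forward along an $SU(2)$-subgroup, indeed fails for $SU(n)$ with $n\ge 4$ (and similarly for most of the classical and exceptional series, where $\pi_6$ is zero or too small), and ``locate a nontrivial $\langle\iota_3,\iota_{2k-1}\rangle$ somewhere'' is a plan, not an argument. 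Carrying this out across all types, including $G_2,F_4,E_6,E_7,E_8$, is genuine work that you have not done.

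Two small corrections. First, the Samelson square $\langle\iota_3,\iota_3\rangle=[\tilde c]$ is a \emph{generator} of $\pi_6(S^3)\cong\Z/12$, not $\pm 2\nu$; this is exactly what is used in the proof of Lemma~\ref{lem:pi7su2} in this paper. Second, it is worth noting that \emph{rationally} every Samelson product in a compact connected Lie group vanishes, since $\pi_\ast(G)\otimes\Q$ is concentrated in odd degrees and the bracket of two odd classes lands in an even degree. So the obstruction you are after is necessarily a torsion phenomenon---consistent with your $SU(2)$ example, but it means no rational or real-cohomology shortcut will finish the argument for you.
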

 
 \begin{theorem}\label{ThGLie}
 Let $G$ be a compact connected Lie group. Then the map $\cm\colon\Ecom G\to B[G,G]$ is nullhomotopic if and only if $G$ is abelian.  In particular, $\Ecom G$ is contractible if and only if $G$ abelian.
 \end{theorem}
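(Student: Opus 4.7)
The backward direction is immediate from the definitions: if $G$ is abelian then $[G,G]$ is trivial, so $B[G,G]$ is a point and $\cm$ is automatically nullhomotopic; moreover $\Bcom G = BG$, so $\Ecom G\simeq\ast$. My plan for the forward direction is to transfer a nullhomotopy of $\cm$ down to a nullhomotopy of the ordinary commutator map $c\colon G\times G\to G$, and then invoke Theorem~\ref{comm}.

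Concretely, assume $\cm$ is nullhomotopic. Then so is the composite $\Sigma(G\wedge G)\hookrightarrow\Ecom G\xrightarrow{\cm} B[G,G]$ from the $1$-skeleton, and by diagram~(\ref{cdiagram}) this composite is homotopic to $\iota\circ\Sigma\tilde c$, where $\iota\colon\Sigma[G,G]\to B[G,G]$ is the $1$-skeleton inclusion. I would then pass to adjoints under the $\Sigma\dashv\Omega$ adjunction. The inclusion $\iota$ is, by construction, adjoint to the canonical comparison map $\eta\colon[G,G]\to\Omega B[G,G]$, which is a weak equivalence because $[G,G]$ is a compact Lie group and in particular well-pointed. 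Consequently the adjoint of $\iota\circ\Sigma\tilde c$ is $\eta\circ\tilde c$, and the assumed nullhomotopy forces $\eta\circ\tilde c$, and hence $\tilde c\colon G\wedge G\to[G,G]$ itself, to be nullhomotopic. Precomposing with the quotient $G\times G\twoheadrightarrow G\wedge G$ and postcomposing with $[G,G]\hookrightarrow G$, we get that $c\colon G\times G\to G$ is nullhomotopic, so Theorem~\ref{comm} (Araki--James--Thomas) yields that $G$ is abelian.

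The ``in particular'' clause is then formal: if $\Ecom G$ is contractible then $\cm$ is trivially nullhomotopic, so by what we just proved $G$ is abelian; conversely, $G$ abelian gives $\Ecom G\simeq\ast$ directly. The only mildly delicate step in this plan is the adjunction argument: one needs to identify the $1$-skeleton inclusion $\Sigma H\hookrightarrow BH$ of a topological group with the transpose of the canonical weak equivalence $H\xrightarrow{\simeq}\Omega BH$. This is classical but worth making explicit, since it is precisely the device that converts a statement about maps into the nonlinear space $B[G,G]$ into the tractable statement that the reduced commutator $\tilde c$ is nullhomotopic, at which point Theorem~\ref{comm} does the heavy lifting.
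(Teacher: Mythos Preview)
Your proof is correct and follows essentially the same route as the paper's: both restrict $\cm$ along the $1$-skeleton inclusion, use diagram~(\ref{cdiagram}) to identify the resulting composite with $\Sigma[G,G]\to B[G,G]$ precomposed with $\Sigma\tilde c$, pass to adjoints via the weak equivalence $[G,G]\xrightarrow{\simeq}\Omega B[G,G]$ to deduce that $\tilde c$ (and hence $c$) is nullhomotopic, and then invoke Araki--James--Thomas. Your write-up is slightly more explicit about the backward direction and about why the adjunction step works, but the argument is the same.
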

 \begin{proof}
Suppose $\cm\colon\Ecom G\to B[G,G]$ is nullhomotopic. Diagram (\ref{cdiagram}) implies that the composite $\Sigma(G\wedge G)\xrightarrow{\Sigma \tilde{c}}\Sigma [G,G]\to B[G,G]$ is nullhomotopic as well. Then the adjoint map 
\[G\wedge G\xrightarrow{\tilde{c}}[G,G]\xrightarrow{\simeq}\Omega B[G,G]\]
is also nullhomotopic, and therefore $\tilde{c}$ is nullhomotopic. Since the commutator $c\colon G\times G\to G$ factors through $\tilde{c}$, it is nullhomotopic as well. The result now follows from Theorem \ref{comm}.
 \end{proof}

A similar result was proven by Adem and G\'{o}mez \cite[Corollary 7.5]{Ad1} for a smaller variant of $\Ecom G$: in the first model for $\Ecom G$ that we described, one can consider at each level the connected component of the trivial homomorphism in $\Hom(\Z^n,G)$ and denote it by $\Hom(\Z^n,G)_\one$. Then $\Ecom G_\one=|G\times\Hom(\Z^\bullet,G)_\one|$. Now we can expand on their result (which is the equivalence of the last three parts):
 
\begin{corollary}
 Let $G$ be a compact connected Lie group. The following are equivalent:
 \begin{enumerate}
  \item $\Ecom G$ is contractible;
  \item $\Ecom G_\one$ is contractible;
  \item $\Ecom G_\one$ is rationally acyclic;
  \item $G$ is abelian.
 \end{enumerate}
\end{corollary}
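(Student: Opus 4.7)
The plan is to derive the corollary by chaining together two results already at our disposal: the equivalence (2) $\Leftrightarrow$ (3) $\Leftrightarrow$ (4), which is the Adem--G\'{o}mez theorem \cite[Corollary 7.5]{Ad1}, and the equivalence (1) $\Leftrightarrow$ (4), which is Theorem \ref{ThGLie} just proved. Concatenating these two chains immediately yields the equivalence of all four conditions.

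For bookkeeping, it is worth noting the easy directions explicitly. The implication (4) $\Rightarrow$ (1) is contained in Theorem \ref{ThGLie}, but also follows directly from the fact that for abelian $G$ one has $\Hom(\Z^n, G) = G^n$ for every $n$, so $\Bcom G = BG$ and therefore $\Ecom G$ is contractible. Since $G$ is also connected, $G^n$ is connected too, so $\Hom(\Z^n, G)_\one = G^n$ as well and the same argument gives (4) $\Rightarrow$ (2). The implication (2) $\Rightarrow$ (3) is trivial, as a contractible space is rationally acyclic.

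The substance of the corollary lies in the two deeper implications: (3) $\Rightarrow$ (4) is the hard direction in Adem--G\'{o}mez, and (1) $\Rightarrow$ (4) is Theorem \ref{ThGLie}, which we established via the map $\cm$ and the Araki--James--Thomas theorem on commutators in compact connected Lie groups. Since both pieces are already in hand, there is no remaining obstacle; the role of the corollary is simply to repackage these results and make precise the sense in which Theorem \ref{ThGLie} strengthens the Adem--G\'{o}mez statement from the connected-component variant $\Ecom G_\one$ to the full $\Ecom G$.
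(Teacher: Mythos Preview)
Your proposal is correct and matches the paper's approach exactly: the corollary is stated without an explicit proof, but the surrounding text makes clear it is obtained by combining Theorem \ref{ThGLie} (giving (1) $\Leftrightarrow$ (4)) with Adem--G\'{o}mez \cite[Corollary 7.5]{Ad1} (giving (2) $\Leftrightarrow$ (3) $\Leftrightarrow$ (4)). Your additional remarks on the easy implications are fine bookkeeping and do not deviate from the intended argument.
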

 
 \begin{remark}\label{rmk:Ecom-SL2R}
 The compactness condition is necessary. For example $G=SL(2,\R)$ is non-abelian, but it {\bf is} homotopy abelian: it deformation retracts to $SO(2)$. A result of Pettet and Suoto \cite[Corollary 1.2]{PS}, implies that $\Ecom SL(2,\R)\simeq \Ecom SO(2)=ESO(2)\simeq\pt $.
 \end{remark}

 \subsection{The case $G = SU(2)$}
 
 Computing the values of $\cm$ in homotopy groups or homology can be quite challenging. A first obstacle is that we only have explicit descriptions of the homotopy type of $\Ecom G$ for very few non-abelian groups. In \cite[Theorem 1.4]{lowdim} it is shown that $\Ecom SU(2)\simeq S^4\vee\Sigma^4\RP^2$, so in particular, this $\Ecom G$ happens to be a suspension. Thus the map $\cm \colon \Ecom SU(2) \to B[SU(2),SU(2)] = BSU(2)$ has an adjoint under the suspension-loops adjunction, which belongs to the abelian group $[S^3\vee\Sigma^3\RP^2,S^3]=\Z\oplus[\Sigma^3\RP^2,S^3]$. We claim that the cohomotopy group $\pi^3(\Sigma^3\RP^2):=[\Sigma^3\RP^2,S^3] \cong \Z/4$. To see this, recall that $\Sigma^3\RP^2$ is the homotopy cofiber of the degree 2 map on $S^4$:
\[
\xymatrix{
S^4\ar[r]^{2}\ar[d]&S^4\ar[d]\\
\ast\ar[r]&\Sigma^3\RP^2.
}
\]
 After applying $\Map_{*}(-,S^3)$ we get a long exact sequence:
\[\pi_5(S^3)\xrightarrow{2\times}\pi_5(S^3)\to\pi^3(\Sigma^3\RP^2)\to \pi_4(S^3)\xrightarrow{2 \times}\pi_4(S^3).\]
Both homomorphisms labeled $2\times$ are zero, since $\pi_5(S^3) \cong \pi_4(S^3) \cong \Z/2$. So the long exact sequence reduces to a short exact sequence $0\to\Z/2\to \pi^3(\Sigma^3\RP^2)\to\Z/2\to 0$, implying that $\pi^3(\Sigma^3\RP^2)$ is either $\Z/4$ or $\Z/2\oplus\Z/2$.

The suspension homomorphism $\pi^3(\Sigma^3\RP^2)\to\pi^4(\Sigma^4\RP^2)$ is a surjection, and the latter group is in the stable range, that is, $\pi^4(\Sigma^4\RP^2)\cong \mathbb{S}^0(\RP^2)$ the stable cohomotopy group of $\RP^2$. This group was computed in \cite[Theorem 1.5]{Mukai2} using Kahn-Priddy maps, yielding $\mathbb{S}^0(\RP^2)=\Z/4$. Since $\pi^3(\Sigma^3\RP^2)$ surjects onto $\Z/4$, it must be $\Z/4$ itself.

For the reader's convenience, we will sketch Mukai's argument in the special case we need, to see directly that $\pi^3(\Sigma^3\RP^2)$ contains an element whose double is nonzero, and therefore $\pi^3(\Sigma^3\RP^2) \cong \Z/4$. Let $\eta\colon S^3\to S^2$ be the Hopf map. Then $\Sigma \eta$ has order 2 in $\pi_4(S^3)$, so the outer square in the following diagram commutes up to homotopy:
\[
\xymatrix{
S^4\ar[r]^{2}\ar[d]&S^4\ar[d]\ar@/^/[rdd]^{\Sigma\eta}\\
\ast\ar[r]\ar@/_/[rrd]&\Sigma^3\RP^2\ar@{-->}[rd]^{\phi_3}\\
&&S^3
}.
\]
Choosing such a homotopy determines a map $\phi_3$ (following the notation of \cite{Mukai} for Kahn-Priddy maps). It is well known that the Hopf map in orthogonal topological $K$-theory gives an epimorphism $(\Sigma\eta)^*\colon\Z \cong \widetilde{KO}^3(S^3)\to \widetilde{KO}^3(S^4) \cong \Z/2$. Applying $\widetilde{KO}^3$ to the top triangle of the above diagram, we see that that $(\Sigma\eta)^*$ factors through $\widetilde{KO}^3(\Sigma^3\RP^2) \cong \widetilde{KO}(\RP^2) \cong \Z/4$, which then implies that $\phi_3^*$ must hit a generator in $\Z/4$ ---otherwise $(\Sigma\eta)^*$ would be 0. Therefore the composite $\Sigma^3\RP^2\xrightarrow{\phi_3} S^3\xrightarrow{2} S^3$ is nonzero on $\widetilde{KO}^3$, making $2\phi_3$ non-nullhomotopic. 

We now have that the homotopy class of the adjoint of $\cm$ is represented by a nonzero element in $\Z\oplus \Z/4$. We claim that the component in $\Z$ is non-zero.

\begin{lemma}\label{lem:pi7su2}
  The following composite is a generator of $\pi_7(BSU(2)) \cong \Z/12$: \[S^7 \simeq F_1\Ecom SU(2) \to \Ecom SU(2) \xrightarrow{\cm} BSU(2).\] In particular, $\cm \colon \Ecom SU(2)\to BSU(2)$ is surjective on $\pi_7$.
\end{lemma}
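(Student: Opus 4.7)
My plan is to identify the given composite with a classical Samelson product and then appeal to a well-known computation. I first use diagram~(\ref{cdiagram}) to factor the composite. Since $[SU(2),SU(2)]=SU(2)$---which follows, for instance, from $[i,j]=-I$ in unit quaternions together with the perfectness of $SO(3)=SU(2)/\{\pm I\}$---we have $B[SU(2),SU(2)]=BSU(2)$, and by diagram~(\ref{cdiagram}) the composite
\[ S^7\simeq F_1\Ecom SU(2)\hookrightarrow \Ecom SU(2)\xrightarrow{\cm}BSU(2) \]
is homotopic to
\[ S^7=\Sigma(SU(2)\wedge SU(2))\xrightarrow{\Sigma\tilde c}\Sigma SU(2)\hookrightarrow BSU(2), \]
with the last arrow being the inclusion of the 1-skeleton $F_1 BSU(2)=\Sigma SU(2)$.

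Next I take loop-suspension adjoints. The inclusion $\Sigma SU(2)\hookrightarrow BSU(2)$ is adjoint to the canonical homotopy equivalence $SU(2)\xrightarrow{\simeq}\Omega BSU(2)$, so under $\pi_7(BSU(2))\cong \pi_6(SU(2))$ the composite corresponds to the reduced commutator $\tilde c\colon S^3\wedge S^3\cong S^6\to SU(2)=S^3$. By construction, this is the Samelson square $\langle\mathrm{id}_{SU(2)},\mathrm{id}_{SU(2)}\rangle\in\pi_6(S^3)$, so the lemma is reduced to showing that this Samelson square generates $\pi_6(S^3)\cong\Z/12$.

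For that I translate to Whitehead products. Under $\pi_6(SU(2))\cong\pi_7(BSU(2))$, the Samelson square corresponds (up to sign) to the Whitehead square $[\iota_4,\iota_4]\in\pi_7(BSU(2))$, where $\iota_4\colon S^4=\mathbb{HP}^1\hookrightarrow \mathbb{HP}^\infty=BSU(2)$ is the inclusion of the 4-skeleton. Using $\mathbb{HP}^2=S^4\cup_\nu D^8$ with $\nu\colon S^7\to S^4$ the quaternionic Hopf map, one has $\pi_7(BSU(2))=\pi_7(S^4)/\langle\nu\rangle \cong (\Z\{\nu\}\oplus\Z/12\{\Sigma\nu'\})/\langle\nu\rangle\cong \Z/12$. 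The classical Whitehead-square identity $[\iota_4,\iota_4]=2\nu\pm\Sigma\nu'$ in $\pi_7(S^4)$ (see, e.g., Toda's \emph{Composition Methods in Homotopy Groups of Spheres}) projects to $\pm\Sigma\nu'$, a generator of $\Z/12$. The main obstacle is really just pinning down this classical formula for $[\iota_4,\iota_4]$; the rest is formal manipulation with diagram~(\ref{cdiagram}) and adjunctions.
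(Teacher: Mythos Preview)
Your proof is correct and follows essentially the same path as the paper's: both use diagram~(\ref{cdiagram}) to factor the composite as $S^7 \xrightarrow{\Sigma\tilde c} S^4 \hookrightarrow BSU(2)$, and both reduce the claim to the classical fact that $\tilde c$ generates $\pi_6(S^3)\cong\Z/12$. The only differences are in packaging: the paper cites \cite{James} directly for that fact and computes $\pi_7(BSU(2))$ from the long exact sequence of the fibration $S^7\to S^4\to BSU(2)$, whereas you pass through the Samelson--Whitehead correspondence, invoke Toda's formula $[\iota_4,\iota_4]=2\nu\pm\Sigma\nu'$ in $\pi_7(S^4)$, and read off $\pi_7(BSU(2))=\pi_7(S^4)/\langle\nu\rangle$ from the cell structure of $\mathbb{HP}^2$. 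These are equivalent routes to the same computation.
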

\begin{proof}
 To see this, notice that in this case diagram (\ref{cdiagram}) reads as
\[
\xymatrix{
\Ecom SU(2)\ar[r]^{\cm}&BSU(2)\\
S^7\ar[u]^{j}\ar[r]^{\Sigma{\tilde c}}&S^4\ar[u]^{i}
}
\]
It is known that $[\tilde c]\in \pi_6(S^3)=\Z/12$ is a generator (see \cite[Section 9]{James}). Armed with prior knowledge that $\pi_7(S^4) \cong \Z \oplus \Z/12$, an inspection of the long exact sequence of homotopy groups of the Hopf fibration $S^3\to S^7\to S^4$, shows there is a splitting $\pi_7(S^4)=\pi_7(S^7)\oplus\langle[\Sigma\tilde c]\rangle$. Using again the long exact sequence of homotopy groups but now for the delooping $S^7\to S^4\xrightarrow{i} BSU(2)$ we see that $i_*$ is surjective on $\pi_7$ and that $\ker i_*=\pi_7(S^7)$. Therefore $\pi_7(BSU(2))=\langle[i\circ \Sigma\tilde c]\rangle$, and by commutativity of diagram (\ref{cdiagram}) the lemma follows.
\end{proof}

\begin{corollary}
The $\Z$-component of the element in $\pi^3(\Sigma^3 \RP^2) \cong \Z \oplus \Z/4$ corresponding to the adjoint of $\cm$ is not a multiple of $3$.
\end{corollary}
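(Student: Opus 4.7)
The plan is to combine Lemma \ref{lem:pi7su2} with a Hilton-style splitting of $\pi_7(\Ecom SU(2))$ to extract the divisibility of the $\Z$-component $n$.

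First, I would factor $\cm$ as $\Ecom SU(2) \xrightarrow{\Sigma \hat{\cm}} F_1 BSU(2) = S^4 \xrightarrow{\epsilon} BSU(2)$, where $\hat{\cm}\colon S^3 \vee \Sigma^3 \RP^2 \to S^3$ is the adjoint under the suspension--loop adjunction and $\epsilon$ is the canonical inclusion (the counit of that adjunction). Writing $\hat{m}\colon \Sigma^4 \RP^2 \to S^4$ for the suspension of the $\Z/4$-component $\hat{\cm}_2$ of $\hat{\cm}$, the restriction of $\Sigma\hat{\cm}$ to the $S^4$-summand of $\Ecom SU(2) \simeq S^4 \vee \Sigma^4 \RP^2$ is the degree $n$ map, and its restriction to $\Sigma^4 \RP^2$ is $\hat{m}$. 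By Hilton's theorem, $\pi_7(S^4 \vee \Sigma^4 \RP^2) \cong \pi_7(S^4) \oplus \pi_7(\Sigma^4 \RP^2)$, the potential Whitehead cross-term $\pi_7(\Sigma^8 \RP^2)$ vanishing for connectivity reasons. Writing the class of the skeletal inclusion $j\colon S^7 = F_1 \Ecom SU(2) \to \Ecom SU(2)$ as $[j] = (a, b)$ in this splitting, we get
\[\Sigma \hat{\cm} \circ j \;=\; n\cdot a + \hat{m} \circ b \;\in\; \pi_7(S^4) \cong \Z \oplus \Z/12.\]

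The heart of the argument is to show $\hat{m} \circ b$ is $4$-torsion in $\pi_7(S^4)$, so that its $\Z$-component vanishes and its $\Z/12$-component lies in $\{0,3,6,9\}$, i.e., is a multiple of $3$. For this I would invoke Freudenthal twice: $\pi_6(\Sigma^3 \RP^2) \to \pi_7(\Sigma^4 \RP^2)$ is an isomorphism, so $b = \Sigma b'$ for some $b'\colon S^6 \to \Sigma^3 \RP^2$; and $\pi_5(\Sigma^2 \RP^2) \to \pi_6(\Sigma^3 \RP^2)$ is surjective, so $b'$ is itself a suspension and hence a co-$H$ map. Then $\hat{m} \circ b = \Sigma(\hat{\cm}_2 \circ b')$, and because $b'$ is a co-$H$ map, pre-composition with $b'$ respects the group structure, so the relation $4\hat{\cm}_2 = 0$ in $[\Sigma^3 \RP^2, S^3] \cong \Z/4$ transports to $4(\hat{\cm}_2 \circ b') = 0$ in $\pi_6(S^3)$; suspending gives the claim.

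Finally, Lemma \ref{lem:pi7su2} together with the commutativity of diagram (\ref{cdiagram}) yields $\epsilon_*(\Sigma \hat{\cm} \circ j) = \epsilon_*(\Sigma \tilde{c})$, a generator of $\pi_7(BSU(2)) \cong \Z/12$. Since $\ker \epsilon_* = \pi_7(S^7)$ is exactly the $\Z$-summand of $\pi_7(S^4)$, the $\Z/12$-components of $\Sigma \hat{\cm} \circ j$ and $\Sigma \tilde{c}$ coincide and are coprime to $3$; reducing mod $3$ in the $\Z/12$-coordinate of the boxed equation gives $n \cdot a_{\Z/12} \not\equiv 0 \pmod{3}$, forcing $n \not\equiv 0 \pmod{3}$. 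I expect the main obstacle to be the $4$-torsion step: $[\Sigma^4 \RP^2, S^4]$ and $\pi_7(S^4)$ carry their group structures from co-$H$ pinches on different spaces, so transporting a torsion relation requires $b$ to be (up to homotopy) a suspension, which is exactly where Freudenthal enters.
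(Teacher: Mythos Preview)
Your approach is essentially an explicit unpacking of the paper's two-line proof, which simply asserts that $j^{*}\colon \Z\oplus\Z/4\to\Z/12$ is a group homomorphism and then argues by orders. You correctly isolate one delicate point---that precomposition by $b$ need not be additive---and resolve it via Freudenthal; this is more than the paper does. The Hilton splitting step is also fine (the cross-term is $\pi_7(\Sigma^{7}\RP^2)$, not $\Sigma^{8}$, but either way it vanishes by connectivity).

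There is, however, a genuine gap in your treatment of the $\Z$-component. From $[j]=i_{1*}a+i_{2*}b$ you correctly get $\Sigma\hat{\cm}\circ j=(\mathrm{deg}_n)\circ a+\hat m\circ b$, but you then write this as $n\cdot a+\hat m\circ b$ and use that literally in the final mod-$3$ reduction. Postcomposition with the degree-$n$ self-map of $S^4$ is \emph{not} multiplication by $n$ on $\pi_7(S^4)$: left distributivity fails, and Hilton's formula gives
\[(\mathrm{deg}_n)\circ a \;=\; n\,a \;+\; \tbinom{n}{2}\,H(a)\,[\iota_4,\iota_4],\]
where $H(a)\in\pi_7(S^7)\cong\Z$ is the Hopf invariant of $a$. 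The correction term is not negligible: $\epsilon_*[\iota_4,\iota_4]=[\epsilon,\epsilon]$ adjoints to the Samelson square $\langle\iota_3,\iota_3\rangle=[\tilde c]$, which \emph{generates} $\pi_7(BSU(2))\cong\Z/12$. Since you have no control over $H(a)$, the inference ``$n\cdot a_{\Z/12}\not\equiv 0\pmod 3$ forces $n\not\equiv 0\pmod 3$'' is unjustified as stated. (The same issue is hidden in the paper's unproved claim that $j^{*}$ is a homomorphism.) The repair is short: if $3\mid n$ then also $3\mid\binom{n}{2}$, so $\epsilon_*$ of the correction term lies in $3\Z/12$ as well, and the contradiction with Lemma~\ref{lem:pi7su2} survives.
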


\begin{proof}
  Lemma \ref{lem:pi7su2} shows the homomorphism $\Z\oplus \Z/4 \cong [\Ecom SU(2),BSU(2)]\xrightarrow{j^{*}} \pi_7(BSU(2)) \cong \Z/12$ maps $[\cm]$ to a generator in $\Z/12$ and is therefore surjective. Since the image of the $\Z/4$ summand is necessarily contained in $3\Z/12$, if the component in $\Z$ of the adjoint of $\cm$ were a multiple of $3$, then $j^{*}$ would have image contained in $3\Z/12$ and could not be surjective.
\end{proof}

\paragraph{Question.} Explicitly, which element of $\Z\oplus \Z/4$ represents the homotopy class of the commutator map $\cm\colon \Ecom SU(2)\to BSU(2)$?

\begin{remark}
  The case of $G = U(2)$ is not really different from that of $SU(2)$, namely, the inclusion $SU(2) \hookrightarrow U(2)$ induces a homotopy equivalence $\Ecom SU(2) \simeq \Ecom U(2)$ and both groups have $SU(2)$ as commutator group; furthermore, the following diagram commutes:
  \[
\xymatrix{
\Ecom U(2)\ar[r]^{\cm}&BSU(2)\\
\Ecom SU(2)\ar[r]^{\cm}\ar[u]^{\simeq}&BSU(2)\ar[u]_{\simeq}.
 }
\]  
\end{remark}

\subsection{The case $G=O(n)$}

We first study the case of the simplest non-abelian disconnected Lie group $G=O(2)$. This is a natural destination after dealing with the cases of discrete groups and of compact connected groups: since $O(2)$ has the feature that both $\pi_0(O(2)) \cong \Z/2$ and the connected component of the identity, $O(2)_0 = SO(2)$, are abelian, we think of $O(2)$ as a small example of a compact Lie group $G$ whose non-abelianness comes from the interaction of $\pi_0(G)$ with $G_0$ and not from either group on its own. We have $[O(2),O(2)]=SO(2)$, and thus a map $\cm\colon\Ecom O(2)\to BSO(2)$. It was shown in \cite{lowdim} that $\Ecom O(2)\simeq \Sigma(S^1\times S^1)$, and thus the adjoint of $\cm$ up to homotopy is a map $S^1\times S^1\to \Omega BSO(2)\simeq S^1$.

\begin{theorem}\label{adcomO(2)}
The adjoint of the map $\cm\colon \Ecom O(2)\to BSO(2)$ is homotopic to the product in $S^1$ or its inverse.
\end{theorem}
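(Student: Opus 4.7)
The adjoint of $\cm$ is a map $S^1 \times S^1 \to \Omega BSO(2) \simeq S^1$. Based homotopy classes of such maps are classified by the bidegree $(a,b) \in H^1(S^1 \times S^1;\Z) \cong \Z^2$; the product has bidegree $(1,1)$ and its inverse $(-1,-1)$, so the theorem is equivalent to showing $(a,b) = \pm(1,1)$.

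My plan is to extract $(a,b)$ from diagram (\ref{cdiagram}) specialized to $G = O(2)$, which identifies the restriction of $\cm$ to $F_1 \Ecom O(2) \simeq \Sigma(O(2)\wedge O(2))$ with $\Sigma \tilde{c}$ followed by the bottom-cell inclusion $\Sigma SO(2) \hookrightarrow BSO(2)$. The reduced commutator $\tilde{c}$ is completely explicit: using the identities $[R_\alpha, S_\beta] = R_{-2\alpha}$, $[S_\alpha, R_\beta] = R_{2\beta}$ and $[S_\alpha, S_\beta] = R_{2(\alpha-\beta)}$ in $O(2)$, together with the K\"unneth decomposition $\tilde{H}_1(O(2)\wedge O(2)) \cong \Z^4$---whose four generators come from the two circles surviving in the mixed components $SO(2)\times SO(2)r$ and $SO(2)r \times SO(2)$ after collapsing $O(2)\vee O(2)$, and from the two generating loops of the (basepoint-disjoint) torus component $SO(2)r \times SO(2)r$ of $O(2)\wedge O(2)$---one computes directly that $\tilde{c}_\ast$ takes the values $(-2, 2, 2, -2)$ on these four generators.

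To finish, I would trace these four classes through the inclusion $F_1 \Ecom O(2) \hookrightarrow \Ecom O(2)$ using the explicit homotopy equivalence $\Ecom O(2) \simeq \Sigma(S^1\times S^1)$ from \cite{lowdim}. The four values $\pm 2$ in $\tilde{c}_\ast$ are all divisible by $2$: the factors of $2$ should get absorbed into the way the $T^2$ factor in $\Sigma T^2 \simeq \Ecom O(2)$ is presented (as a quotient involving the double covers $SO(2)r \to S^1$ arising from the $R_\pi$-translation on each reflection component), and after the 2-simplices of $\AfCom_\bullet(O(2))$ coming from triples in a common Klein-four or $SO(2)$-coset force identifications among the four $H_1$-classes of $O(2)\wedge O(2)$, the paired signs should coalesce into a primitive vector of the form $\pm(1,1)$ on $H_2(\Ecom O(2)) \cong \Z^2$.

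The main obstacle is precisely this last bookkeeping step: the map $H_2(F_1\Ecom O(2)) \cong \Z^4 \to H_2(\Ecom O(2)) \cong \Z^2$ is not an isomorphism, and to conclude one needs to use the explicit CW model of \cite{lowdim} to identify which $H_1$-classes of $O(2)\wedge O(2)$ become equal (and with which signs) in $\Ecom O(2)$, and to match this with the identification $H_2(\Ecom O(2)) \cong H_1(T^2)$ used implicitly in the statement. Once that bookkeeping is carried out, the computation of $\tilde{c}_\ast$ above pins down the bidegree as $\pm(1,1)$.
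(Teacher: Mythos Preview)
Your setup matches the paper's exactly: the reduction to computing $\cm_\ast$ on $H_2$, the use of diagram (\ref{cdiagram}), and the explicit evaluation of $\tilde{c}_\ast$ on the four $H_1$-generators of $O(2)\wedge O(2)$ (the paper also finds that each hits $\pm 2$). The gap is precisely the step you flag as the ``main obstacle'': you have not actually carried out the identification of the map $\Z^4 \cong H_2(F_1\Ecom O(2)) \to H_2(\Ecom O(2)) \cong \Z^2$, and your sketch of how the factors of $2$ should ``get absorbed'' via double covers and Klein-four cosets is speculation, not an argument. Without that step there is no proof, and a direct trace through the CW model of \cite{lowdim} is more delicate than you suggest---one must match orientations and identify which pairs of classes become equal with which signs, and a sign slip would give $(a,b)=\pm(1,-1)$ instead.

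The paper sidesteps this bookkeeping entirely with a short equivariance argument that you are missing. Conjugation by $A$ gives a $\Z/2$-action making diagram (\ref{rcomO(2)}) equivariant. On $H_2(\Sigma(O(2)\wedge O(2)))$ the action negates each of the four generators (read off from $AR_\theta A = R_{-\theta}$ and $A(AR_\theta)A = AR_{-\theta}$), while \cite[Corollary 7.3]{lowdim} shows it acts on $H_2(\Ecom O(2))\cong \Z\oplus\Z$ by $(u,v)\mapsto(-v,-u)$. Two consequences follow immediately: the image of the left vertical map lands in the diagonal of $\Z\oplus\Z$, and $\cm_\ast$ must have the form $(u,v)\mapsto k(u+v)$ for some integer $k$. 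Running around the square then gives $2k\Z = 2\Z$, so $k=\pm 1$. This replaces your proposed explicit chase with a two-line symmetry argument; I recommend adopting it rather than trying to push the direct computation through.
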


To see this it will be enough to show that the induced morphism on $H_2 = H_2(-; \Z)$, or more precisely the composite $\Z \oplus \Z \cong H_2(\Ecom O(2)) \xrightarrow{\cm_{*}} H_2(BSO(2)) \cong \Z$, is given by plus or minus the sum. Indeed the homotopy class of the adjoint map $\cm^\dagger \colon S^1 \times S^1 \to SO(2)$ is determined by its action on $H_1$, and we can recover $\cm$ up to homotopy as the composite $\Ecom O(2) \xrightarrow{\Sigma \cm^\dagger} \Sigma SO(2) \to BSO(2)$---in which the map $\Sigma SO(2) \to BSO(2)$ is an $H_2$-isomorphism. 

 For the homology calculation, we study the reduced commutator $\tilde{c}\colon O(2)\wedge O(2)\to SO(2)$. Let us write $O(2)=SO(2)\sqcup A \; SO(2)$, where $A=
\left(
\begin{matrix}
1&0\\
0&-1 
\end{matrix}
\right)$. Then $O(2) \wedge O(2)$ is given by:
\[\Bigl((SO(2)\wedge SO(2))\vee(SO(2)\wedge A \; SO(2)_+)\vee(A \; SO(2)_+\wedge SO(2))\Bigr) \sqcup(A \; SO(2)\times A \; SO(2)).\]
Now, $SO(2)\wedge A \; SO(2)_+\simeq S^2\vee S^1$ in such a way that the inclusion $S^1 \hookrightarrow S^2 \vee S^1$ is homotopic to $S^1 \simeq SO(2)\times\{A\}\hookrightarrow SO(2)\wedge A \; SO(2)_+$. To describe the suspension of $O(2) \wedge O(2)$, recall that $\Sigma (X \sqcup Y) \simeq \Sigma X \vee \Sigma Y \vee S^1$ and $\Sigma (X \times Y) \simeq \Sigma X \wedge Y \vee \Sigma X \vee \Sigma Y$. All together, we have
$\Sigma O(2) \wedge O(2) \simeq (S^3)^{\vee 4} \vee (S^2)^{\vee 4} \vee S^1$.  The four copies of $S^2$ are given by \[\Sigma(SO(2)\times\{A\})\vee\Sigma(\{A\}\times SO(2))\vee \Sigma(A \; SO(2)\times\{A\})\vee \Sigma(\{A\}\times A \; SO(2)),\] where the last two correspond to the 2-cells of $\Sigma(A \; SO(2)\times A \; SO(2))$. Then $H_2(\Sigma (O(2)\wedge O(2)))=\Z^4$.

We want to understand the effect of the map $\Sigma\tilde{c}$ on $H_2$. To do this, let $R_\theta \in SO(2)$ denote rotation by $\theta$, so that the elements in $A \; SO(2)$ can be written as $AR_\theta$. One can readily check the following identities:
\begin{itemize}
\item $[AR_\theta,R_\tau]=R_{2\tau}$
\item $[R_\theta,AR_\tau]=R_{-2\theta}$
\item $[AR_\theta,AR_\tau]=R_{2(\tau-\theta)}$
\end{itemize}
 The above identities imply that $\Sigma\tilde{c}_*\colon H_2(\Sigma(O(2)\wedge O(2)))\to H_2(\Sigma SO(2))\cong \Z$ maps the fundamental class of each of those four copies of $S^2$ to $\pm 2$. The commutative diagram
\begin{equation}\label{rcomO(2)}
  \begin{gathered}
\xymatrix{
H_2(\Ecom O(2))\ar[r]^{\cm_*}&H_2(BSO(2))\\
H_2(\Sigma(O(2)\wedge O(2)))\ar[r]^-{\Sigma \tilde{c}_*}\ar[u]&H_2(\Sigma SO(2))\ar[u]_\cong
 }
\end{gathered}
\end{equation}
then implies that  $2\Z\subseteq \mathop{\mathrm{im}} \cm_*$.

Recall that Ganea's Lemma says that the homotopy fiber of the inclusion $\Sigma G\to BG$ is the join $G*\Omega BG \simeq G * G \simeq \Sigma(G\wedge G)$. Moreover, the inclusion factors as $\Sigma G \to \Bcom G\to BG$, which induces a map of homotopy fiber sequences:
\[
\xymatrix{
\Sigma (G\wedge G) \ar[r]\ar[d]&\Ecom G \ar[d]\\
\Sigma G \ar[r]\ar[d]&\Bcom G \ar[d]\\
BG\ar@{=}[r]&BG.
}
\]

We may conclude that the map between the fibers is equivariant with respect to the monodromy action. For $G=O(2)$, it was shown in \cite[Corollary 7.3]{lowdim} that the action of $\pi_1(BO(2))=\Z/2$ on $H_2(\Ecom O(2))\cong \Z\oplus \Z$ swaps and negates both entries. The corresponding action on $\Sigma(O(2)\wedge O(2))$ can be deduced from the $O(2)$-conjugation action on the simplicial model of $EO(2)$. Since the action factors through $\pi_0(O(2))=\Z/2$, we only need to know the action of conjugation by the matrix $A$ on $O(2)$. For rotations $R_\theta$, we have $AR_\theta A=R_{-\theta}$ and for reflections $AR_{\theta}$, we have $A(AR_{\theta})A=AR_{-\theta}$. By the above description of the generators in $H_2(\Sigma (O(2)\wedge O(2)))$, we see that the monodromy action negates each of the four generators.

\begin{proof}[Proof of Theorem \ref{adcomO(2)}]
  The commutative square (\ref{rcomO(2)}) is equivariant for the action of $\Z/2$ given by conjugating by $A$.
By the description of the action given above, we conclude that the image of $H_2(\Sigma (O(2)\wedge O(2)))\to H_2(\Ecom O(2)) \xrightarrow{\cong} \Z \oplus \Z$ is contained in the diagonal of $\Z\oplus \Z$; and that the composite  $\Z \oplus \Z \cong H_2(\Ecom O(2)) \xrightarrow{\cm_{*}} H_2(BSO(2)) \cong \Z$ is given by $(u,v) \mapsto k(u+v)$ for some $k\in \Z$. Then, the image of the diagonal under $\cm_*$ is $2k\Z$. This shows that following the top path of diagram (\ref{rcomO(2)}), the image is $2k\Z$. We have already shown that following the bottom path of (\ref{rcomO(2)}), the image is $2\Z$. Therefore $k=\pm 1$, as desired.
\end{proof}

\begin{proposition}\label{EcomOn}
Let $n\geq 3$. Then $\cm\colon \Ecom O(n)\to BSO(n)$ is 3-connected.
\end{proposition}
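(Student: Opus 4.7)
The proof will rely on specializing the commutative square \eqref{cdiagram} to $G = O(n)$, noting that $[O(n),O(n)] = SO(n)$, so that the top row reads $\cm \colon \Ecom O(n) \to BSO(n)$ and the bottom row reads $\Sigma \tilde{c} \colon \Sigma(O(n)\wedge O(n)) \to \Sigma SO(n)$, with right-hand vertical map $\beta \colon \Sigma SO(n) \to BSO(n)$. Since $\pi_1(BSO(n)) = 0$, $\pi_2(BSO(n)) = \Z/2$, and $\pi_3(BSO(n)) = \pi_2(SO(n)) = 0$ for $n \geq 3$, the assertion that $\hofib(\cm)$ is $2$-connected amounts to (a) $\pi_1(\Ecom O(n)) = 0$, together with (b) $\cm_{\ast} \colon \pi_2(\Ecom O(n)) \xrightarrow{\sim} \Z/2$; the surjection on $\pi_3$ is then automatic.

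As a preliminary I would verify that $\beta$ itself is $3$-connected. By Ganea's theorem, $\hofib(\beta) \simeq \Sigma(SO(n) \wedge SO(n))$, and since $SO(n)$ is path-connected, the fold map $\pi_1(SO(n)) \ast \pi_1(SO(n)) \twoheadrightarrow \pi_1(SO(n)) \times \pi_1(SO(n))$ is surjective, so a van Kampen argument gives $\pi_1(SO(n) \wedge SO(n)) = 0$. Hence the fiber of $\beta$ is $2$-connected, and in particular $\beta_{\ast}$ is an isomorphism on $H_2$.

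Claim (a) is the main obstacle. The space $\Sigma(O(n) \wedge O(n))$ is itself \emph{not} simply connected: $O(n) \wedge O(n)$ has two path components---the basepoint component and $(O(n) \setminus SO(n))^{\wedge 2}$---and the latter, being disjoint from the collapsed axes, contributes an $S^1$ wedge-summand to the suspension. To see this extra circle dies inside $\Ecom O(n)$, I would exhibit $2$-simplices of $\AfCom_\bullet O(n)$ that fill it in: fixing a reflection $A \in O(n)\setminus SO(n)$ and a rotation $R \in SO(n)$ commuting with $A$ (many such $R$ exist, since the centralizer of $A$ contains a copy of $SO(n-1)$), the triple $(e, A, AR)$ is affinely commutative and yields a $2$-cell connecting the basepoint component of $\Sigma(O(n)\wedge O(n))$ to the extra component. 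A continuous family of such triples kills the offending $S^1$, and the remaining $\pi_1$-contributions within each individual path component of $O(n) \wedge O(n)$ vanish by the same van Kampen argument used in the preliminary step.

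For claim (b), I would chase $H_2$ around \eqref{cdiagram}. Since $\beta_{\ast}$ is an isomorphism on $H_2$, surjectivity of $\cm_{\ast}$ on $H_2$ reduces to showing $\Sigma \tilde{c}_{\ast}$ surjects onto $H_2(\Sigma SO(n)) = H_1(SO(n)) = \Z/2$. Using the block embedding $O(2) \hookrightarrow O(n)$ and the commutator identity $[R_\theta, AR_\tau] = R_{-2\theta}$ from the proof of Theorem \ref{adcomO(2)}, one locates a $2$-cycle in $(A \cdot SO(2)) \wedge (A \cdot SO(2)) \hookrightarrow O(n) \wedge O(n)$ whose image under $\tilde{c}$ represents a generator of $\pi_1(SO(n))$---namely, the image of $\pi_1(SO(2)) \twoheadrightarrow \pi_1(SO(n)) \cong \Z/2$. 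Claim (a) together with the Hurewicz theorem then upgrades this surjection on $H_2$ to the $\pi_2$-isomorphism required by (b), completing the proof.
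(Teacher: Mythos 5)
Your opening reduction is fine: since $\pi_1(BSO(n))=0$, $\pi_2(BSO(n))\cong\Z/2$ and $\pi_3(BSO(n))=0$, the statement amounts to (a) $\pi_1(\Ecom O(n))=0$ and (b) $\cm_*$ being an isomorphism on $\pi_2$. Your sketch of (a) can be made to work, and more simply than you suggest: $\pi_1(F_1\Ecom O(n))\cong\pi_1(O(n)\ast O(n))\cong\tilde H_0(O(n))\otimes\tilde H_0(O(n))\cong\Z$ surjects onto $\pi_1(\Ecom O(n))$, and its generator is the edge loop $x_{e,A}x_{A,e}$ for a reflection $A$, which already bounds the affinely commutative $2$-simplex $(e,A,e)$ (the quotients $A$ and $A^{-1}$ commute); your triple $(e,A,AR)$ gives a relation but does not by itself bound the offending circle. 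The paper instead just quotes \cite[Lemma 4.3]{Ad3}. The real problems are in (b), where there are two genuine gaps.

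First, your surjectivity argument fails. Every class you can push through the bottom row of diagram~(\ref{cdiagram}) dies in $H_2(BSO(n))\cong\Z/2$: the identities $[AR_\theta,AR_\tau]=R_{2(\tau-\theta)}$, $[R_\theta,AR_\tau]=R_{-2\theta}$ always produce an \emph{even} multiple of the generator of $\pi_1(SO(2))$ --- this is exactly the computation in the paper showing that the image of $H_2(\Sigma(O(2)\wedge O(2)))$ in $H_2(BSO(2))\cong\Z$ is $2\Z$ --- and $2\Z$ maps to $0$ under $\pi_1(SO(2))\twoheadrightarrow\pi_1(SO(n))\cong\Z/2$. The same parity obstruction kills all of $H_2(\Sigma(O(n)\wedge O(n)))\cong\tilde H_0(O(n))\otimes H_1(O(n))\oplus H_1(O(n))\otimes\tilde H_0(O(n))$, whose generators are represented by loops of commutators $t\mapsto[g_0,\gamma(t)]$ with $g_0$ a reflection; these are trivial in $\pi_1(SO(n))$. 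So the composite $\Sigma(O(n)\wedge O(n))\to\Ecom O(n)\xrightarrow{\cm}BSO(n)$ is zero on $H_2$, and surjectivity must come from classes of $\pi_2(\Ecom O(n))$ not in the image of the $1$-skeleton. This is precisely why the paper routes through $\Ecom O(2)$: Theorem \ref{adcomO(2)} shows $\cm_*\colon H_2(\Ecom O(2))\cong\Z\oplus\Z\to\Z$ is $(u,v)\mapsto\pm(u+v)$, hence hits a generator (not just $2\Z$), and composing with $\pi_2(BSO(2))\twoheadrightarrow\pi_2(BSO(n))$ and naturality of $\cm$ gives the surjection. Second, and independently: a surjection onto $\Z/2$ is an isomorphism only if the source has order two, and you never compute $\pi_2(\Ecom O(n))$. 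Hurewicz together with (a) upgrades an $H_2$-surjection to a $\pi_2$-surjection and nothing more. The missing input, which is the main content of the paper's proof, is the identification $\pi_2(\Ecom O(n))\cong\Z/2$, obtained from the splitting $\pi_k(\Bcom G)\cong\pi_k(\Ecom G)\oplus\pi_k(BG)$ of \cite{Ad5} together with $\pi_2(\Bcom O(n))\cong\Z/2\oplus\Z/2$ from \cite{RV}. Without it your argument cannot exclude a kernel for $\cm_*$ on $\pi_2$.
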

\begin{proof}
First we show that the map induces an isomorphism on $\pi_2$. Consider the diagram
\[
\xymatrix{
\Ecom O(n)\ar[r]^{\cm}&BSO(n)\\
\Ecom O(2)\ar[r]^{\cm}\ar[u]&BSO(2)\ar[u]
 }
\]
where the vertical arrows are induced by the inclusions $O(2)\to O(n)$ and $SO(2)\to SO(n)$.

Let's identify $\pi_2$ of all these spaces. For $\Ecom O(2) \simeq \Sigma(S^1 \times S^1)$, we have $\pi_2(\Ecom O(2)) \cong \Z \oplus \Z$; the groups $\pi_2(BSO(2)) \cong \Z$ and $\pi_2(BSO(n)) \cong \Z/2$ are well-known. The fact that $\pi_2(\Ecom O(n)) \cong \Z/2$ is implicit in the literature. Indeed, in \cite[Theorem 6.3 and Remark]{Ad5} it is proved that for every $k\geq 2$, $\pi_k(\Bcom G)$ has a splitting as $\pi_k(\Ecom G)\oplus\pi_k(BG)$, induced by a homotopy section of the fibration $\Omega\Ecom G\to \Omega\Bcom G\to \Omega BG$. And it is shown in \cite[Proposition 4.4]{RV} that for $n\geq 3$, we have $\pi_2(\Bcom O(n))=\Z/2\oplus\Z/2$, which then implies that $\pi_2(\Ecom O(n))=\Z/2$.

Because $\pi_2(\Ecom O(n)) \cong \Z/2 \cong \pi_2(BSO(n))$, $\cm_{*}$ can only be zero or an isomorphism. From Theorem \ref{adcomO(2)} we see that the composite starting with the bottom horizontal arrow is surjective on $\pi_2$, so $\cm_{*}$ must be an isomorphism.

Next we deal with $\pi_1$ and $\pi_3$. First, $\Ecom O(n)$ is simply connected by \cite[Lemma 4.3]{Ad3}, since any reflection in $O(n)$ induces a section of $O(n)\to \pi_0(O(n))=\Z/2$. Finally, we have a surjection on $\pi_3$ since $\pi_3(BSO(n))=0$. 
\end{proof}

{\footnotesize {\sc \noindent Omar Antol\'{\i}n Camarena\\
   Instituto de Matem\'aticas, UNAM, Mexico City}\\
  \emph{E-mail address}:
  \href{mailto:omar@matem.unam.mx}{\texttt{omar@matem.unam.mx}}}
\medskip

{\footnotesize {\sc \noindent Bernardo Villarreal \\
   Instituto de Matem\'aticas, UNAM, Mexico City}\\
  \emph{E-mail address}:
  \href{mailto:villarrealr@matem.unam.mx}{\texttt{villarreal@matem.unam.mx}}}
\end{document}